\DeclareMathOperator*{\argmin}{argmin}
\newtheorem{thm}{Theorem}[section]
\newtheorem{lem}[thm]{Lemma}
\theoremstyle{definition}
\newtheorem{defn}[thm]{Definition}
\theoremstyle{remark}
\newtheorem{rem}[thm]{Remark}
\begin{document}

\title{On the Multiple Packing Densities of Triangles}

\author{Kirati Sriamorn\footnote{School of Mathematical Sciences, Peking University, Beijing, China}}

\maketitle

\begin{abstract}
  Given a convex disk $K$ and a positive integer $k$, let $\delta_T^k(K)$ and $\delta_L^k(K)$ denote the $k$-fold translative packing density and the $k$-fold lattice packing density of $K$, respectively. Let $T$ be a triangle. In a very recent paper \cite{sriamorn}, I proved that $\delta_L^k(T)=\frac{2k^2}{2k+1}$. In this paper, I will show that $\delta_T^k(T)=\delta_L^k(T)$.
\end{abstract}

\bigskip

\textbf{Keywords} Multiple packing $\cdot$ Packing density $\cdot$ Triangle

\medskip

\textbf{Mathematics Subject Classification} 05B40 $\cdot$ 11H31 $\cdot$ 52C15

\section{Introduction}
Let $D$ be a connected subset of $\mathbb{R}^2$.
A family of bounded sets $\mathcal{F}=\{S_1,S_2,\ldots\}$ is said to be a \emph{k-fold packing} of $D$ if $\bigcup S_i\subset D$ and each point of $D$ belongs to the interiors of at most $k$ sets of the family. In particular, when all $S_i$ are translates of a fixed measurable bounded set $S$, the corresponding family is called a \emph{k-fold translative packing} of $D$ with $S$. When the translation vectors form a lattice, the corresponding family is called a \emph{k-fold lattice packing} of $D$ with $S$. Let $I=[0,1)$, and let $M(S,k,l)$ be the maximum number of bounded sets in a $k$-fold translative packing of $lI^2$ with $S$. Then, we define
$$\delta_T^k(S)=\limsup_{l\rightarrow\infty} \frac{M(S,k,l)|S|}{|lI^2|}.$$
Similarly, we can define $\delta_L^k(S)$ for the $k$-fold lattice packings.

A family of bounded sets $\mathcal{F}=\{S_1,S_2,\ldots\}$ is said to be a \emph{k-fold covering} of $D$ if each point of $D$ belongs to at least $k$ sets of the family. In particular, when all $S_i$ are translates of a fixed measurable bounded set $S$ the corresponding family is called a \emph{k-fold translative covering} of $D$ with $S$. When the translation vectors form a lattice, the corresponding family is called a \emph{k-fold lattice covering} of $D$ with $S$. Let $m(S,k,l)$ be the minimal number of translates in a $k$-fold translative covering of $lI^2$ with $S$. Then, we define
$$\vartheta_T^k(S)=\liminf_{l\rightarrow\infty} \frac{m(S,k,l)|S|}{|lI^2|}.$$
Similarly, we can define $\vartheta_L^k(S)$ for the $k$-fold lattice coverings.

We usually denote by $\delta_{T}$ and $\delta_{L}$ the $1$-fold packing densities $\delta^1_{T}$ and $\delta^1_{L}$, respectively. It is well known that $\delta_{T}(K)=\delta_{L}(K)$ holds for every convex disk $K$ \cite{rogers}.
In particular, we have $\delta_{T}(T)=\delta_{L}(T)$ for every triangle $T$. For the case that $K=C$ is a \emph{centrally symmetric} convex disk, Fejes T\'{o}th \cite{fejes} proved that $\delta(C)=\delta_{T}(C)=\delta_{L}(C)$ where $\delta(C)$ is the (congruent) packing density of $C$. In fact, we have the following statements:
\begin{itemize}
\item Let $\{C_1,C_2,\ldots,C_N\}$ be a packing of a convex hexagon $H$ (Fig. \ref{packingofH}), where $C_i$ is a convex disk. Then we can find \emph{convex polygons} $R_1,R_2,\ldots,R_N$ (Fig. \ref{polygonal}) such that
    \begin{enumerate}[(a)]
    \item $R_i\supseteq C_i$ for every $i=1,2,\ldots,N$;
    \item $\{R_1,R_2,\ldots,R_N\}$ is also a packing of $H$;
    \item the number $s_i$ of sides of $R_i$ satisfies
    \begin{equation}\label{mean_sides_inequality}
    \sum_{i=1}^{N}s_i\leq 6N.
    \end{equation}
    \end{enumerate}
\begin{figure}[!ht]
  \centering
    \includegraphics[scale=.7]{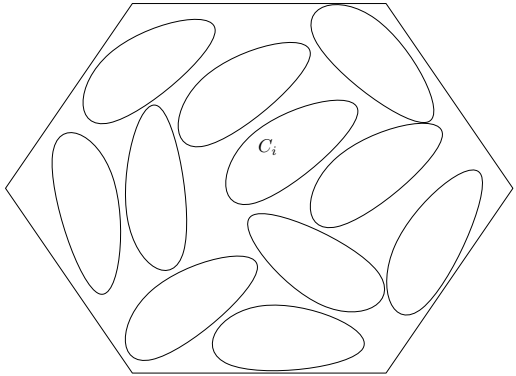}
   \caption{A packing $\{C_i\}$ of a convex hexagon $H$}\label{packingofH}
\end{figure}
\begin{figure}[!ht]
  \centering
    \includegraphics[scale=.7]{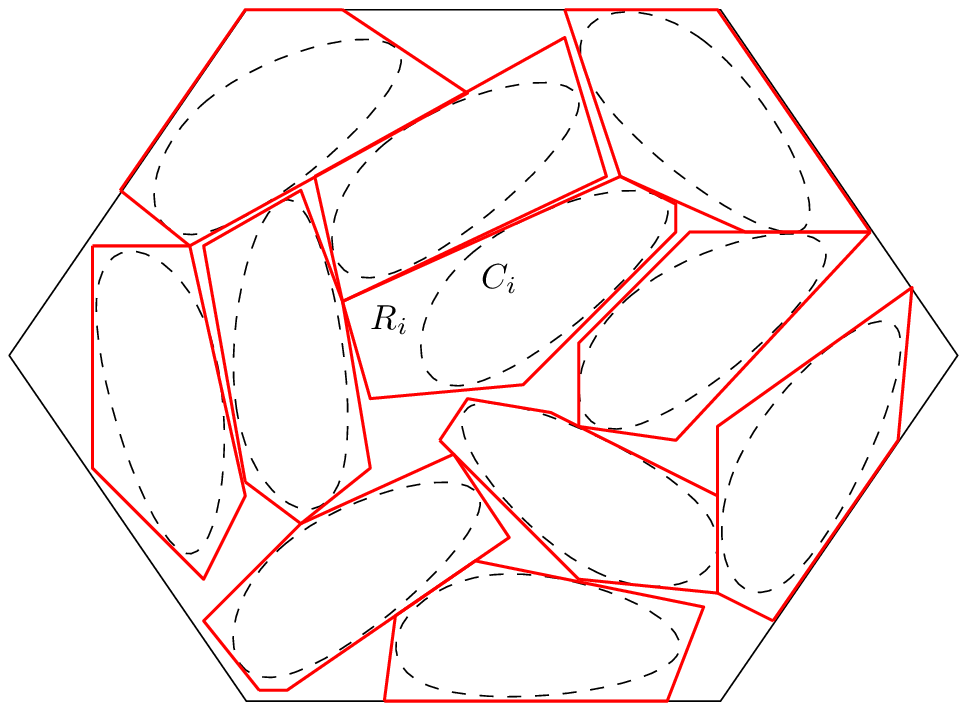}
   \caption{The polygons $R_1,R_2,\ldots,R_N$}\label{polygonal}
\end{figure}

\item (\emph{Dowker's Theorem}) Given a convex disk $C$, $n\geq 3$, let $A(n)$ denote the minimum area of an $n$-gon circumscribed about $C$. Then
    \begin{equation}\label{dowker_convexity}
    A(n)\leq \frac{A(n-1)+A(n+1)}{2}.
    \end{equation}
\item Let $C$ be a \emph{centrally symmetric} convex disk and let $n\geq 4$ be an even integer. Then one can find a convex $n$-gon $P_n$ circumscribed about $C$ with minimum area such that it is centrally symmetric and has the same center as $C$. As a consequence, we have that
    \begin{equation}\label{one_fold_lattice_density}
    \delta_L(C)\geq \frac{|C|}{|P_6|}=\frac{|C|}{A(6)},
    \end{equation}
    where $A(6)$ is the minimum area of a hexagon circumscribed about $C$.
\end{itemize}
By using these results, we can show now that $\delta(C)=\delta_{L}(C)$ where $C$ is a centrally symmetric convex disk. Let $H$ be a convex hexagon, and let $\{C_1,C_2,\ldots,C_N\}$ be a packing of $H$ with congruent copies of $C$. By (\ref{mean_sides_inequality}) and (\ref{dowker_convexity}), we have that
\begin{align*}
\frac{|C_1|+|C_2|+\cdots +|C_N|}{|H|}&=\frac{N|C|}{|H|}\\
&\leq \frac{N|C|}{|R_1|+|R_2|+\cdots+|R_N|}\\
&\leq \frac{N|C|}{A(s_1)+A(s_2)+\cdots+A(s_N)}\\
&\leq \frac{|C|}{A(6)},
\end{align*}
and hence
\begin{equation}\label{one_fold_congruent_density}
\delta(C)\leq \frac{|C|}{A(6)}.
\end{equation}
Since $\delta(C)\geq \delta_L(C)$, by (\ref{one_fold_lattice_density}) and (\ref{one_fold_congruent_density}), we obtain
\begin{equation*}
\delta(C)=\delta_L(C).
\end{equation*}

In a very recent paper, Sriamorn \cite{sriamorn} studied the $k$-fold lattice coverings and packings with  triangles $T$. He proved that
$$\delta_L^k(T)=\frac{2k^2}{2k+1},$$
and
$$\vartheta_L^k(T)=\frac{2k+1}{2}.$$
Furthermore, Sriamorn and Wetayawanich \cite{sriamorn1} showed that $\vartheta_T^k(T)=\vartheta_L^k(T)$ for every triangle $T$. In this paper, I will prove the following result:
\begin{thm} \label{main_thm}
For every triangle $T$, we have $\delta_T^k(T)=\delta_L^k(T)=\frac{2k^2}{2k+1}.$
\end{thm}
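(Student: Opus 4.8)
The inequality $\delta_T^k(T)\ge\delta_L^k(T)$ is immediate, since every $k$-fold lattice packing is in particular a $k$-fold translative packing; combined with Sriamorn's evaluation $\delta_L^k(T)=\frac{2k^2}{2k+1}$, everything reduces to the upper bound $\delta_T^k(T)\le\frac{2k^2}{2k+1}$. The plan is to prove this by adapting, to the $k$-fold and orientation-rigid situation, the polygonal-decomposition argument recalled in the introduction.

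First I would fix the combinatorial setup. Since $\delta_T^k$ is affine invariant, fix $T$ once and for all; by a standard subdivision and limiting argument it suffices to bound $N$ when $\{T+v_1,\dots,T+v_N\}$ is a $k$-fold translative packing of a convex hexagon $H$ much larger than $T$, up to an error linear in the perimeter of $H$. Writing $d(x)$ for the number of triangles $T+v_i$ whose interior contains $x$, the $k$-fold condition is simply $d\le k$, i.e.\ no translate of $-T$ contains more than $k$ of the points $v_1,\dots,v_N$ in its interior; moreover $\int_H d(x)\,dx=\sum_i|T+v_i|=N|T|$. Hence the target bound $N|T|\le\frac{2k^2}{2k+1}|H|$ is equivalent to the \emph{deficiency estimate}
$$\int_H\big(k-d(x)\big)\,dx\ \ge\ \frac{1}{2k}\sum_{i=1}^{N}|T+v_i|\ =\ \frac{N|T|}{2k},$$
i.e.\ that on average each triangle of the packing is accompanied by an uncovered-enough region of area at least $|T|/(2k)$.

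To prove the deficiency estimate I would decompose $H$ into the convex cells cut out by the arrangement of the $3N$ edges of the triangles. Each bounded cell is an intersection of half-planes whose normals lie among the six directions determined by $T$, hence is a convex polygon with at most six sides, and the multiplicity $d$ is constant, equal to some $d_c\in\{0,1,\dots,k\}$, on the interior of each cell $c$; thus $\sum_c d_c|c|=N|T|$ and $\sum_c|c|=|H|$. What remains is a local, Dowker-type inequality: around each triangle $T+v_i$ --- concretely, in the thin region just beyond its hypotenuse --- the cells of multiplicity $<k$ must account for a deficiency $\sum(k-d_c)|c|\ge|T|/(2k)$ chargeable to that triangle, with each cell charged to only boundedly many triangles. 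Summing over $i$ then gives the displayed estimate, and the optimal $k$-fold lattice packing of Sriamorn shows that the constant $1/(2k)$ cannot be improved, which is exactly why the density comes out to $\frac{2k^2}{2k+1}$.

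The main obstacle is precisely this local inequality. For $k=1$ it is, after repackaging, Oler's inequality --- equivalently the $k=1$ case of the theorem, which is already delicate and is in any case available through Rogers' theorem $\delta_T(T)=\delta_L(T)$. The difficulty for $k\ge 2$ is that a $k$-fold packing need not be a union of $k$ ordinary packings: indeed $\frac{2k^2}{2k+1}>\frac{2k}{3}$, so $k$-fold triangle packings are strictly denser than $k$ superimposed simple packings, and one cannot reduce to the case $k=1$ by peeling off layers. The overlapping (stacked) triangles must therefore be handled directly inside the cell decomposition; making the attribution of deficiency to the hypotenuses precise, and controlling the overcounting, is where the genuinely new work lies.
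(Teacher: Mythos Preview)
Your proposal is not a proof but a program with an explicitly acknowledged gap: you reduce the theorem to a ``local, Dowker-type inequality'' attributing deficiency at least $|T|/(2k)$ to each triangle with bounded overcounting, and then write that ``making the attribution of deficiency to the hypotenuses precise, and controlling the overcounting, is where the genuinely new work lies.'' That is precisely the content of the theorem, and you have not supplied it. The reformulation as a deficiency estimate is correct but routine; nothing in the proposal indicates how the charging is to be performed, to which cells or along which hypotenuse segments, or why the constant $1/(2k)$ emerges, beyond the observation that the lattice construction shows it is sharp. Invoking Oler/Rogers for $k=1$ does not help, since, as you yourself note, a $k$-fold packing cannot be peeled into $k$ simple packings.

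The paper closes this gap by a different mechanism. Fixing $T$ as the standard right triangle, it puts a lexicographic-type order $\prec$ on translates (by the sum of coordinates of the lower-left vertex, with a tiebreak) and says $T_j$ \emph{presses} $T_i$ when $v(T_i)\prec v(T_j)$ and their circumscribed unit squares meet. To each $T_i$ in a normal $k$-fold packing it assigns a \emph{half-open $r_i$-stair polygon} $S_i$: start from the unit square $I^2(T_i)$ and delete, for every $k$-tuple of pressing triangles whose unit squares intersect, the upper-right quadrant based at the lower-left corner of that intersection. One checks that $\mathrm{Int}(T_i)\subset S_i$, that $\{S_i\}$ is still a $k$-fold packing, and---the substantive combinatorial step---that $\sum_i r_i\le(2k-1)N$, via a double count of which base vertices $v(S_j)$ lie in which $\mathrm{Int}(S_i)\cup Z(S_i)$. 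Since the minimal area $A^*(r)=\frac{r+2}{2(r+1)}$ of an $r$-stair polygon containing $T$ is convex in $r$, Jensen gives $\sum_i|S_i|\ge N\,A^*(2k-1)$, and then
\[
\frac{N|T|}{|lI^2|}\ \le\ \frac{kN|T|}{\sum_i|S_i|}\ \le\ \frac{k|T|}{A^*(2k-1)}\ =\ \frac{2k^2}{2k+1}.
\]
The stair polygons play exactly the role of your missing charging lemma: instead of a cell decomposition of $H$ with an ad hoc attribution of deficiency, one has enlargements $S_i\supset T_i$ whose shapes are constrained enough for a Dowker-style convexity (that of $A^*$) to apply, and whose combinatorics (the bound on $\sum r_i$) replaces any Euler-type side counting. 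If you want to salvage your outline, this is the structure you would need to reproduce.
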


To prove this result, analogous to the proof of $\delta(C)=\delta_L(C)$ above, I will define an \emph{$r$-stair polygon} (Definition \ref{def_stair_polygon}) and use it in place of ``convex $n$-gon" above. More precisely, I will show the following statements:
\begin{itemize}
\item Let $\{T_1,T_2,\ldots,T_N\}$ be a $k$-fold \emph{translative} packing of $lI^2$ (for some positive $l$) with a triangle $T$. Then we can find \emph{stair polygons} $S_1,S_2,\ldots,S_N$ such that
    \begin{enumerate}[(a)]
    \item $S_i\supseteq T_i$ for every $i=1,2,\ldots,N$;
    \item $\{S_1,S_2,\ldots,S_N\}$ is a $k$-fold packing of $lI^2$ (Lemma \ref{S_i_exact_k_fold});
    \item we have
    \begin{equation*}
    \sum_{i=1}^{N}r_i\leq (2k-1)N,
    \end{equation*}
    where $S_i$ is an $r_i$-stair polygon (Lemma \ref{approximate_average_stair_point}).
    \end{enumerate}
\item  For $r\geq 0$, let $A^*(r)$ denote the minimum area of an $r$-stair polygon containing $T$ (see (\ref{A_star}) below). Then
    \begin{equation*}
    A^*(r)\leq \frac{A^*(r-1)+A^*(r+1)}{2}.
    \end{equation*}
\item For the case of $k$-fold lattice packings, we have
    \begin{equation*}
    \delta_L^k(T)=\frac{k|T|}{A^*(2k-1)}=\frac{2k^2}{2k+1}.
    \end{equation*}
\end{itemize}
In order to construct the desired stair polygons, I will introduce a strict partial order of triangles and a term ``press" (Section \ref{def_of_press}). Furthermore, due to technical reasons, I will introduce a concept of a \emph{normal} $k$-fold translative packing (Section \ref{def_of_normal}). An advantage of using this concept is that, by Theorem \ref{normal_packing} below, we may assume, without loss of generality, that the $k$-fold translative packing of our concern is normal, i.e., none of translates coincide. This could simplify our proof.

It is worth noting that when we talk about ($1$-fold) packings, we often refer to the concept of shadow cells \cite{pach}. I will show here a way to extend this concept to $k$-fold packings. For a nonzero vector $v$ and a point $q\in\mathbb{R}^2$, denote by $L(q,v)$ the ray parallel to $v$ and starting at $q$. Suppose that $K$ is a convex disk and $K\cap L(q,v)\neq\emptyset$, then we define
 \begin{equation*}
 \partial K(q,v)=\argmin_{p\in K\cap L(q,v)}{d(p,q)},
 \end{equation*}
 where $d(p,q)$ is the Euclidean distance between points $p$ and $q$ (Fig. \ref{raycutk}). Obviously, if $q\in K$, then $\partial K(q,v)=q$ for every nonzero vector $v$.
\begin{figure}[!ht]
  \centering
    \includegraphics[scale=1]{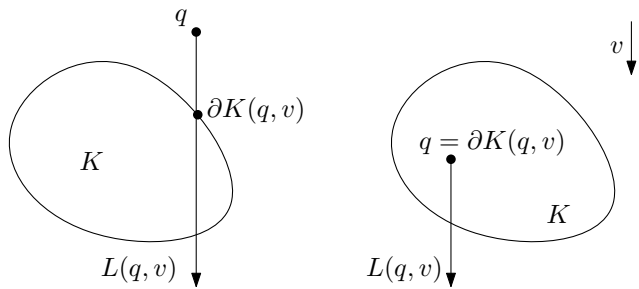}
   \caption{$\partial K(q,v)$}\label{raycutk}
\end{figure}

 \begin{defn}
(\emph{$k$-fold shadow cell}). Let a family of convex disks $\{K_1,K_2,\ldots\}$ be a $k$-fold packing of the plane and let $v$ be a nonzero vector. For every $i$, let $S_i$ be defined as the set of those points $q\in\mathbb{R}^2$ which either (i) $q\in K_i$ or (ii) $K_i\cap L(q,v)\neq\emptyset$ and there are at most $k-1$ numbers of $j$ such that $j\neq i$ and $d(\partial K_j(q,v),q)\leq d(\partial K_i(q,v),q)$. $S_i$ is called a \emph{$k$-fold shadow cell} of $K_i$ (Fig. \ref{shadowcell}).
 \end{defn}
\begin{figure}[!ht]
  \centering
    \includegraphics[scale=.61]{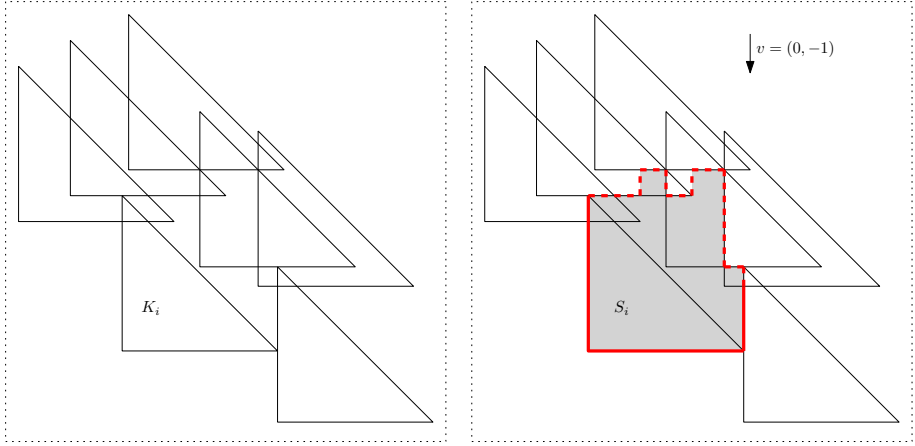}
   \caption{An example for $2$-fold shadow cell}\label{shadowcell}
\end{figure}
\begin{rem}
When $k=1$, we can give another definition of shadow cells by changing the condition $d(\partial K_j(q,v),q)\leq d(\partial K_i(q,v),q)$ in the above definition to $d(\partial K_j(q,v),q)< d(\partial K_i(q,v),q)$. Noting that the definition obtained this way will be equivalent to the definition of shadow cells described in \cite{pach}. However, we could not do the same thing for the case $k>1$, otherwise the family of shadow cells $\{S_1,S_2,\ldots\}$ might not be a $k$-fold packing of the plane. As shown in Fig. \ref{counterexample}, let $k=2$, if we use the condition $d(\partial K_j(q,v),q)< d(\partial K_i(q,v),q)$, then $q$ will lie in $S_1$, $S_2$ and $S_3$ for all $q\in D$, and hence $\{S_1,S_2,\ldots\}$ is not a $2$-fold packing.
\begin{figure}[!ht]
  \centering
    \includegraphics[scale=1]{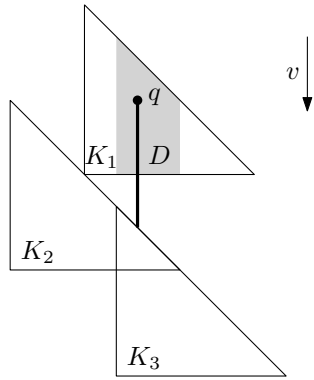}
   \caption{A counter example}\label{counterexample}
\end{figure}
\end{rem}
  Naturally, we could use the concept of $k$-fold shadow cells instead of stair polygons in our proof. However, I found that in general it is difficult to say clearly what shape the shadow cells are. Even for the case of $k$-fold translative packings with a triangle, although we might show that the shadow cells are polygons, but it is still quite hard to say how many sides they have, and hence it is not so easy to estimate their areas or to obtain the desired properties. In contrast, the shape of stair polygons is much more simple. Therefore, when we study a $k$-fold translative packing of a triangle, it seems that using the concept of stair polygons is better than using the concept of shadow cells.

\section{Normal $k$-Fold Translative Packing}\label{def_of_normal}
Let $D$ be a connected subset of $\mathbb{R}^2$ and $\mathcal{K}=\{K_1,K_2,\ldots\}$ a family of convex disks. Suppose that $\mathcal{K}$ is a $k$-fold packing of $D$. We say that $\mathcal{K}$ is \emph{normal} provided $K_i\neq K_j$ for all $i\neq j$. When $\mathcal{K}$ is normal and $K_i$ are translates of a fixed convex disk $K$, the corresponding family is called a \emph{normal k-fold translative packing} of $D$ with $K$. Let $\widetilde{M}(K,k,l)$ be the maximum number of convex disks in a normal $k$-fold translative packing of $lI^2$ with $K$. Then, we define
$$\widetilde{\delta}_T^{k}(K)=\limsup_{l\rightarrow\infty}\frac{\widetilde{M}(K,k,l)|K|}{|lI^2|}.$$

\begin{thm}\label{normal_packing}
For every convex disk $K$, we have
$$\widetilde{\delta}_T^{k}(K)=\delta_T^k(K).$$
\end{thm}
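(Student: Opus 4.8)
The inequality $\widetilde{\delta}_T^{k}(K)\le\delta_T^k(K)$ is immediate, since every normal $k$-fold translative packing is in particular a $k$-fold translative packing, so $\widetilde{M}(K,k,l)\le M(K,k,l)$ for all $l$ and the limsup comparison follows. The substance is the reverse inequality, and the plan is to prove it via the claim that for every $l>0$ and every $\eta>0$,
\[
\widetilde{M}\!\left(K,k,(1+\eta)l\right)\ \ge\ M(K,k,l)\,;
\]
that is, any $k$-fold translative packing can be made normal at the price of enlarging the container by a factor $1+\eta$, with no loss in the number of translates. Once this is known, fixing $\eta$ and passing to the limsup along the containers $(1+\eta)lI^2$ gives $\widetilde{\delta}_T^{k}(K)\ge(1+\eta)^{-2}\delta_T^k(K)$, and letting $\eta\to0^{+}$ finishes the proof.

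The reason for first enlarging, rather than perturbing a given packing in place, is that a direct perturbation of the translates can destroy the $k$-fold property: a boundary point of one translate, legitimately covered $k$ times, may be pushed into the interior of a perturbed translate. Scaling up creates precisely the slack needed to avoid this, after which each translate is shrunk back to a genuine copy of $K$, placed generically so that all positions become distinct.

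To prove the claim, I would start from a $k$-fold translative packing $\{K+v_1,\dots,K+v_N\}$ of $lI^2$ with $N=M(K,k,l)$ and apply the homothety $x\mapsto(1+\eta)x$ centred at the origin. This yields $\{(1+\eta)K+(1+\eta)v_i\}_{i=1}^N$, which is a $k$-fold packing of $(1+\eta)lI^2$ (a homothety clearly preserves both the containment and the multiplicity condition), each member being a homothetic copy of $K$ with ratio $1+\eta>1$. For each $i$ the set $V_i=\{t\in\mathbb{R}^2:\ K+t\subseteq(1+\eta)K+(1+\eta)v_i\}$ is a convex body with nonempty interior — an elementary fact about inscribing a translate of a convex body into a strictly larger homothet of it — hence infinite, so one can choose $t_i\in V_i$ with $t_1,\dots,t_N$ pairwise distinct (greedily, since each $V_i$ is infinite). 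The family $\{K+t_1,\dots,K+t_N\}$ then consists of $N$ pairwise distinct translates of $K$, is contained in $\bigcup_i\big((1+\eta)K+(1+\eta)v_i\big)\subseteq(1+\eta)lI^2$, and — since $\operatorname{int}(K+t_i)\subseteq\operatorname{int}\big((1+\eta)K+(1+\eta)v_i\big)$ for every $i$ — has multiplicity at every point bounded by that of the scaled packing, hence at most $k$. So it is a normal $k$-fold translative packing of $(1+\eta)lI^2$ with $N$ members, which proves the claim. (If $l$ is required to be an integer, one replaces $(1+\eta)l$ by $\lceil(1+\eta)l\rceil$ throughout and uses that $\widetilde{M}(K,k,\cdot)$ is nondecreasing; the limit computation is unaffected.)

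The argument is short and I do not anticipate a serious obstacle. The two points needing care are the elementary lemma that a homothet of $K$ of ratio $>1$ contains a full-dimensional family of inscribed translates of $K$ (so that the positions $t_i$ have genuine room to be made distinct), and the routine but load-bearing observation that replacing each member of a $k$-fold packing by a subset preserves the $k$-fold property. The only genuinely useful idea is to scale up before perturbing, instead of attempting to separate coincident translates in place.
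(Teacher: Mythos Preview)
Your proof is correct and follows essentially the same approach as the paper: both arguments replace each translate by a generically placed subset (a smaller homothetic copy), using that this preserves the $k$-fold property and that a strict homothet admits infinitely many inscribed translates, then let the scaling parameter tend to $1$. The only cosmetic difference is that the paper shrinks $K$ to $(1-\varepsilon)K$ inside the fixed container $lI^2$ and invokes the scale-invariance $\widetilde{\delta}_T^{k}((1-\varepsilon)K)=\widetilde{\delta}_T^{k}(K)$, whereas you equivalently enlarge the container by $1+\eta$ first and then inscribe copies of $K$.
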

\begin{proof}
Trivially, we have that $\widetilde{\delta}_T^{k}(K)\leq\delta_T^k(K)$. Let $\{K_1,\ldots K_M\}$ be a $k$-fold translative packing of $lI^2$ with $K$. For any $K_i$, one can see that for every $0<\varepsilon<1$, there exist infinitely many points $(x,y)$ in the plane such that $K_i\supset (1-\varepsilon)K_i+(x,y)$. Hence, for every $0<\varepsilon<1$, there exist $M$ points $(x_1,y_1),\ldots,(x_M,y_M)$ in the plane such that $\{(1-\varepsilon)K_1+(x_1,y_1),\ldots,(1-\varepsilon)K_M+(x_M,y_M)\}$ is a normal $k$-fold translative packing of $lI^2$ with $(1-\varepsilon)K$. Therefore, $M\leq \widetilde{M}((1-\varepsilon)K,k,l)$. This implies that $M(K,k,l)\leq \widetilde{M}((1-\varepsilon)K,k,l)$, and hence
\begin{align*}
\delta_T^{k}(K)&=\limsup_{l\rightarrow\infty}\frac{M(K,k,l)|K|}{|lI^2|}\\
&\leq\limsup_{l\rightarrow\infty}\frac{\widetilde{M}((1-\varepsilon)K,k,l)|K|}{|lI^2|}\\
&=\frac{1}{(1-\varepsilon)^2}\widetilde{\delta}_T^{k}((1-\varepsilon)K)\\
&=\frac{1}{(1-\varepsilon)^2}\widetilde{\delta}_T^{k}(K).
\end{align*}
By letting $\varepsilon$ tend to zero, one obtains the result.
\end{proof}

\section{Some Notations}\label{def_of_press}
In this paper, we denote by $T$ the triangle with vertices $(0,0)$, $(1,0)$ and $(0,1)$.
If $T'=T+(x,y)$ where $(x,y)\in\mathbb{R}^2$, then we denote by $I^2(T')$ the square $I^2+(x,y)$, and denote by $H(T')$ the hypothenuse of $T'$.

For $(x_1,y_1),(x_2,y_2)\in\mathbb{R}^2$, we define the relation $\prec$ by $(x_1,y_1)\prec (x_2,y_2)$ if and only if either
$$x_1+y_1<x_2+y_2$$
or
$$x_1+y_1=x_2+y_2 \text{~and~} x_1<x_2.$$
One can easily show that $\prec$ is a strict partial ordering over $\mathbb{R}^2$.

Let $K$ be a nonempty bounded set. We define
$$V(K)=\{u\in\mathbb{R}^2: u\prec u' \text{~or~} u=u', \text{~for all~} u'\in K\}.$$
Denote by $v(K)$ the point $u$ in $V(K)$ such that for all $u'\in V(K)$, $u'\prec u$ or $u'=u$. For example, $v(T+(x,y))=(x,y)$ and $v(I^2+(x,y))=(x,y)$.

Suppose that $T_1$ and $T_2$ are two distinct translates of $T$ and $I^2(T_1)\cap I^2(T_2)\neq \emptyset$. We say that $T_1$ \emph{presses} $T_2$ provided $v(T_2)\prec v(T_1)$ (Fig. \ref{typeofpress}).
As immediate consequence of the definition, one can see that for every two translates $T_1,T_2$ of $T$, if $I^2(T_1)\cap I^2(T_2)\neq\emptyset$ and $T_1\neq T_2$, then either $T_1$ presses $T_2$ or $T_2$ presses $T_1$.

\begin{figure}[!ht]
  \centering
    \includegraphics[scale=.85]{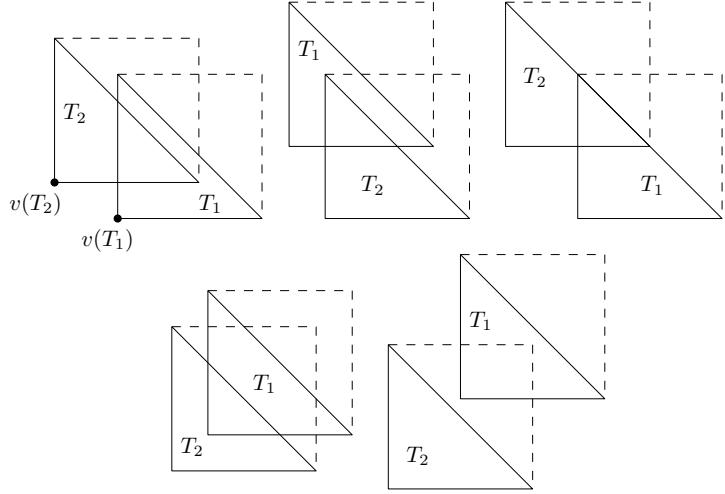}
   \caption{$T_1$ presses $T_2$}\label{typeofpress}
\end{figure}

\begin{lem}
Suppose that $T_1,~T_2$ and $T_3$ are three distinct translates of $T$ and $I^2(T_1)\cap I^2(T_3)\neq\emptyset$. If $T_1$ presses $T_2$ and $T_2$ presses $T_3$, then $T_1$ presses $T_3$.
\end{lem}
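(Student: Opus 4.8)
The plan is to observe that the relation ``$T_1$ presses $T_2$'' is simply the conjunction of three conditions: $T_1\neq T_2$, $I^2(T_1)\cap I^2(T_2)\neq\emptyset$, and $v(T_2)\prec v(T_1)$. To conclude ``$T_1$ presses $T_3$'' I must verify these three with $T_2$ replaced by $T_3$. Two of them are already handed to us: $T_1\neq T_3$ because $T_1,T_2,T_3$ are assumed to be three \emph{distinct} translates, and $I^2(T_1)\cap I^2(T_3)\neq\emptyset$ is assumed outright in the statement. Hence the whole problem reduces to checking the single order relation $v(T_3)\prec v(T_1)$.

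For that I would just unwind the two hypotheses: ``$T_1$ presses $T_2$'' gives $v(T_2)\prec v(T_1)$, and ``$T_2$ presses $T_3$'' gives $v(T_3)\prec v(T_2)$. Since $\prec$ has already been noted to be a strict partial ordering on $\mathbb{R}^2$, in particular transitive, these chain to $v(T_3)\prec v(T_1)$. Collecting this with $T_1\neq T_3$ and $I^2(T_1)\cap I^2(T_3)\neq\emptyset$ reproduces exactly the definition of ``$T_1$ presses $T_3$'', which finishes the proof.

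There is essentially no obstacle here; the content of the lemma is transitivity of $\prec$ transported through the definition of ``presses''. The only point worth flagging is why the hypothesis $I^2(T_1)\cap I^2(T_3)\neq\emptyset$ has to appear in the statement at all: it cannot be deduced from $I^2(T_1)\cap I^2(T_2)\neq\emptyset$ and $I^2(T_2)\cap I^2(T_3)\neq\emptyset$, since three unit squares can pairwise intersect along a chain without the two extreme ones meeting. So this assumption must be carried along explicitly, and once it is, the argument is immediate.
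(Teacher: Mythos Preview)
Your proof is correct and follows essentially the same route as the paper: both extract $v(T_2)\prec v(T_1)$ and $v(T_3)\prec v(T_2)$ from the hypotheses, invoke transitivity of $\prec$ to get $v(T_3)\prec v(T_1)$, and combine this with the assumed distinctness and $I^2(T_1)\cap I^2(T_3)\neq\emptyset$ to conclude. Your added remark explaining why the intersection hypothesis must be stated explicitly is a nice clarification but does not change the argument.
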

\begin{proof}
Since $T_1$ presses $T_2$ and $T_2$ presses $T_3$, we have that $v(T_2)\prec v(T_1)$ and $v(T_3)\prec v(T_2)$. Hence  $v(T_3)\prec v(T_1)$. This implies immediately that $T_1$ presses $T_3$.
\end{proof}

\begin{lem}\label{cut_by_the_others}
Suppose that $T_1,\ldots,T_n$ are $n$ distinct translates of $T$ and $I^2(T_1)\cap\cdots\cap I^2(T_n)\neq\emptyset$. Then, there exists $i\in\{1,\ldots,n\}$ such that $T_j$ presses $T_i$ for all $j\neq i$.
\end{lem}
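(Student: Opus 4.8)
The plan is to show that, on the finite set $\{T_1,\ldots,T_n\}$, the relation ``presses'' is a strict total order, and then take $i$ to be its least element. First I would fix a point $p\in I^2(T_1)\cap\cdots\cap I^2(T_n)$; then $p\in I^2(T_a)\cap I^2(T_b)$ for all $a,b$, so $I^2(T_a)\cap I^2(T_b)\neq\emptyset$ for every pair, and hence ``presses'' is meaningfully defined on every pair among the $T_i$. Moreover, since $v(T+(x,y))=(x,y)$ and the $T_i$ are pairwise distinct translates of $T$, the points $v(T_1),\ldots,v(T_n)$ are pairwise distinct.

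Next I would use that $\prec$ is in fact a strict \emph{total} order on $\mathbb{R}^2$, being the lexicographic order applied to the pair $(x+y,\,x)$: for distinct points, exactly one of $u\prec u'$, $u'\prec u$ holds. Applying this to the distinct points $v(T_a),v(T_b)$, and recalling the definition of ``presses'', we get that for each pair $a\neq b$ exactly one of ``$T_a$ presses $T_b$'', ``$T_b$ presses $T_a$'' holds. Together with the transitivity established in the previous lemma, this shows that ``presses'' is a strict total order on $\{T_1,\ldots,T_n\}$ (and equivalently that $T_a$ presses $T_b$ iff $v(T_b)\prec v(T_a)$ throughout this set).

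Finally I would choose $i$ so that $v(T_i)$ is $\prec$-minimal among $v(T_1),\ldots,v(T_n)$, which exists and is unique because a strict total order on a nonempty finite set has a least element. Then for every $j\neq i$ we have $v(T_i)\prec v(T_j)$, i.e.\ $T_j$ presses $T_i$ — the requirement $I^2(T_i)\cap I^2(T_j)\neq\emptyset$ being guaranteed by the common point $p$ — which is exactly the conclusion.

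I do not anticipate a genuine obstacle. The only points requiring care are deducing pairwise square intersections from the nonempty common intersection (so that ``presses'' is defined on each pair) and noting that $\prec$ gives a true dichotomy on distinct points; if one prefers not to invoke totality of $\prec$ explicitly, one can instead apply, to a $\prec$-minimal $v(T_i)$, the dichotomy ``either $T_1$ presses $T_2$ or $T_2$ presses $T_1$'' recorded right after the definition of ``presses,'' which amounts to the same argument.
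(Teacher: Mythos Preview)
Your proposal is correct and follows essentially the same approach as the paper: pick $i$ so that $v(T_i)$ is $\prec$-minimal among the $v(T_j)$, note that the nonempty common intersection forces $I^2(T_i)\cap I^2(T_j)\neq\emptyset$ for each $j$, and conclude from the definition of ``presses.'' Your detour through establishing that ``presses'' is a strict total order (via the previous transitivity lemma) is unnecessary extra machinery --- the paper simply invokes the $\prec$-minimum directly --- but you yourself observe this in your final remark.
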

\begin{proof}
It is easy to see that there exists $i\in\{1,\ldots,n\}$ such that $v(T_i)\prec v(T_j)$ for all $j\neq i$. We note that $I^2(T_i)\cap I^2(T_j)\neq\emptyset$. By the definition, we have that $T_j$ presses $T_i$ for all $j\neq i$.
\end{proof}

\begin{lem}\label{hypothenuse}
Let $T'$ be a translate of $T$ and $u$ be a point in $I^2(T')$. We have that $u\in T'$ if and only if $u\prec u'$ for some $u'\in H(T')$.
\end{lem}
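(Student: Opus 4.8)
The plan is a direct case analysis, after making the translation explicit. First I would write $T'=T+(a,b)$, so that
$T'=\{(x,y):x\ge a,\ y\ge b,\ x+y\le a+b+1\}$, the square is $I^2(T')=[a,a+1)\times[b,b+1)$, and the hypotenuse is the closed segment $H(T')=\{(x,y):x+y=a+b+1,\ a\le x\le a+1\}$, whose two endpoints are $(a+1,b)$ and $(a,b+1)$. The crucial preliminary observation is that if $u=(x,y)\in I^2(T')$, then already $x\ge a$ and $y\ge b$, so the \emph{only} condition separating membership in $T'$ from membership in $I^2(T')$ is the single inequality $x+y\le a+b+1$.

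For the ``if'' direction, suppose $u\prec u'$ for some $u'=(x',y')\in H(T')$. By the definition of $\prec$, either $x+y<x'+y'$, or $x+y=x'+y'$ (with $x<x'$); in either case $x+y\le x'+y'=a+b+1$. Combined with $x\ge a$ and $y\ge b$, which hold because $u\in I^2(T')$, this gives $u\in T'$.

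For the ``only if'' direction, suppose $u\in T'$, so $x+y\le a+b+1$. I would use the single test point $u'=(a+1,b)\in H(T')$. If $x+y<a+b+1$, then $x+y<x'+y'=a+b+1$, so $u\prec u'$ directly from the first clause of the definition of $\prec$. If $x+y=a+b+1$, then the coordinate sums of $u$ and $u'$ are equal, so I need the tie-breaking clause: here I invoke that $I^2(T')$ is half-open, whence $x<a+1=x'$, and therefore $u\prec u'$.

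The argument is essentially routine; the one place requiring care is the boundary case $x+y=a+b+1$, in which $u$ itself lies on the hypotenuse. There the strictness of the partial order $\prec$ is what forces us to exhibit a point of $H(T')$ lying strictly above $u$, and it is precisely the half-openness of $I^2$ (giving $x<a+1$, so that the endpoint $(a+1,b)$ of $H(T')$ works) that makes this possible. I do not expect any other obstacle.
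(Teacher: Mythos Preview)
Your proof is correct and follows essentially the same approach as the paper's: for the ``only if'' direction the paper also takes the single test point $u'=(a+1,b)\in H(T')$ (in its notation, $(x+1,y)$), and for the ``if'' direction it simply declares the conclusion obvious. Your version is just a more explicit unpacking of the same argument, including the boundary case $x+y=a+b+1$ where the half-openness of $I^2(T')$ is needed.
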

\begin{proof}
Suppose that $T'=T+(x,y)$. If $u\in T'\cap I^2(T')$, then let $u'=(x+1,y)$. It is clear that $u\prec u'$ and $u'\in H(T')$. Conversely,
if $u\prec u'$ for some $u'\in H(T')$, then it is obvious that $u\in T'$.
\end{proof}

\begin{lem}\label{I_cut_T_must_cut}
Suppose that $T_1,\ldots,T_{n+1}$ are $n+1$ distinct translates of $T$. If $T_i$ presses $T_{n+1}$ for all $i=1,\ldots,n$, then $(I^2(T_1)\cap\cdots\cap I^2(T_n))\cap T_{n+1}\subset T_1\cap\cdots\cap T_n\cap T_{n+1}$.
\end{lem}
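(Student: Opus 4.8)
The plan is to translate everything into coordinates and reduce the asserted inclusion to a single scalar inequality. Write $T_i=T+(x_i,y_i)$, so that $v(T_i)=(x_i,y_i)$ and $I^2(T_i)=[x_i,x_i+1)\times[y_i,y_i+1)$. Fix an arbitrary point $u=(u_1,u_2)$ lying in $(I^2(T_1)\cap\cdots\cap I^2(T_n))\cap T_{n+1}$; since $u\in T_{n+1}$ already, it suffices to show that $u\in T_i$ for each fixed $i\in\{1,\ldots,n\}$, for then $u\in T_1\cap\cdots\cap T_n\cap T_{n+1}$ and, $u$ being arbitrary, the inclusion follows. Now $u\in I^2(T_i)$ gives $u_1\geq x_i$ and $u_2\geq y_i$, while $u\in T_{n+1}$, i.e. $u-(x_{n+1},y_{n+1})\in T$, gives $u_1+u_2\leq x_{n+1}+y_{n+1}+1$. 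Hence $u\in T_i$ will follow as soon as we know that $u_1+u_2\leq x_i+y_i+1$, because then $u-(x_i,y_i)$ has nonnegative coordinates with sum at most $1$.

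It is precisely here that the hypothesis ``$T_i$ presses $T_{n+1}$'' enters: by definition this means $v(T_{n+1})\prec v(T_i)$, that is, $(x_{n+1},y_{n+1})\prec(x_i,y_i)$, and inspecting the two clauses in the definition of $\prec$ one sees that in either case $x_{n+1}+y_{n+1}\leq x_i+y_i$. Combining this with the estimate above gives $u_1+u_2\leq x_{n+1}+y_{n+1}+1\leq x_i+y_i+1$, which is exactly what was needed, so $u\in T_i$.

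I do not anticipate any real obstacle: once the translates are written out, the only substantive fact is that $\prec$ refines the partial order given by the coordinate sum, which is immediate from its definition. (Alternatively one could route the argument through Lemma \ref{hypothenuse}, using $u\in T_{n+1}\subseteq I^2(T_{n+1})$ to produce some $w\in H(T_{n+1})$ with $u\prec w$ and then sliding along the hypotenuses to a point of $H(T_i)$ that dominates $u$; but that detour still requires the same coordinate-sum comparison and is longer, so I would present the direct computation.) The only point needing a moment's care is that $I^2$ is half-open, but the argument only invokes the inequalities $u_1\geq x_i$ and $u_2\geq y_i$, which are unaffected.
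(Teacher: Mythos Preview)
Your proof is correct and is the same argument the paper gives, only unwrapped: the paper phrases the two ingredients (``$u\in I^2(T_i)$ plus a coordinate-sum bound forces $u\in T_i$'' and ``$u\in T_{n+1}$ together with $v(T_{n+1})\prec v(T_i)$ yields that coordinate-sum bound'') through Lemma~\ref{hypothenuse}, whereas you compute them directly. You in fact anticipated this in your closing remark; either presentation is fine, and your direct version is arguably cleaner since it avoids the mild nuisance that $u\in T_{n+1}$ need not lie in $I^2(T_{n+1})$ when invoking Lemma~\ref{hypothenuse}.
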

\begin{proof}
 Suppose that $u\in (I^2(T_1)\cap\cdots\cap I^2(T_n))\cap T_{n+1}$. Since $T_i$ presses $T_{n+1}$ and $u\in T_{n+1}$, by Lemma \ref{hypothenuse}, it is not hard to see that $u\prec u'_i$ for some $u'_i\in H(T_i)$. Again, by Lemma \ref{hypothenuse}, we have that $u\in T_i$ for all $i=1,\ldots,n$, and hence $u\in T_1\cap\cdots\cap T_n\cap T_{n+1}$.
\end{proof}

\begin{defn}\label{def_stair_polygon}
For a non-negative integer $r$, we call a planar set $S$ a \emph{half-open r-stair polygon} (Fig. \ref{stairpolygon}) if there are $x_0<x_1<\cdots< x_{r+1}$ and $y_0>y_1>\cdots > y_r>y_{r+1}$ such that
$$S=\bigcup_{i=0}^r[x_i,x_{i+1})\times[y_{r+1},y_i).$$

\begin{figure}[!ht]
  \centering
    \includegraphics[scale=.70]{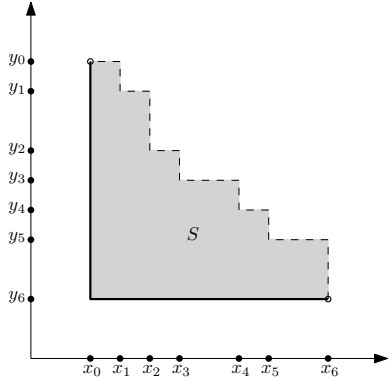}
   \caption{A half-open $5$-stair polygon}\label{stairpolygon}
\end{figure}
\end{defn}

Let $A^*(r)$ denote the minimum area of a half-open $r$-stair polygon containing $Int(T)$. Clearly, $A^*$ is a decreasing function. By elementary calculations, one can obtain
\begin{equation}\label{A_star}
A^*(r)=\frac{r+2}{2(r+1)},
\end{equation}
where $r=0,1,2,\ldots$. Let $B^*$ be the function on $[0,+\infty)$ defined by
\begin{equation}
B^*(x)=\frac{x+2}{2(x+1)},
\end{equation}
It is obvious that $B^*$ is a decreasing convex function and $B^*(r)=A^*(r)$ , for all $r=0,1,2,\ldots$. For convenience, we also denote the function $B^*$ by $A^*$. In \cite{sriamorn}, Sriamorn showed that
\begin{equation}\label{lattice_density}
\delta_L^k(T)=\frac{k|T|}{A^*(2k-1)}=\frac{2k^2}{2k+1}.
\end{equation}
\section{The Construction of Stair Polygons $S_i$}\label{key_section}
In this section, we suppose that $\mathcal{T}=\{T_1,T_2,\ldots,T_N\}$ is a \emph{normal} $k$-fold translative packing of $lI^2$ with $T$. We will use the terminologies given above to construct the desired stair polygons $S_1,S_2,\ldots,S_N$. In fact, due to technical reasons (but not essential), we will construct \emph{half-open} stair polygons instead of (closed) stair polygons. This could make it easier to prove our desired results.

Denote by $\mathcal{C}_i$ the collection of triangles $T_j$ that press $T_i$. Let
\begin{equation*}
U_i=\bigcup_{\substack{T_{i_1},\ldots,T_{i_k}\in\mathcal{C}_i \text{~are distinct}\\ I^2(T_{i_1})\cap\cdots\cap I^2(T_{i_k})\neq\emptyset}} R(v(I^2(T_{i_1})\cap\cdots\cap I^2(T_{i_k}))),
\end{equation*}
and
$$S_i=I^2(T_i)\setminus U_i,$$
where $R(x_0,y_0)$ denotes the set $\{(x,y): x\geq x_0, y\geq y_0\}$ (for example, see Fig. \ref{onefold} and Fig. \ref{twofold} ).
\begin{figure}[!ht]
  \centering
    \includegraphics[scale=.8]{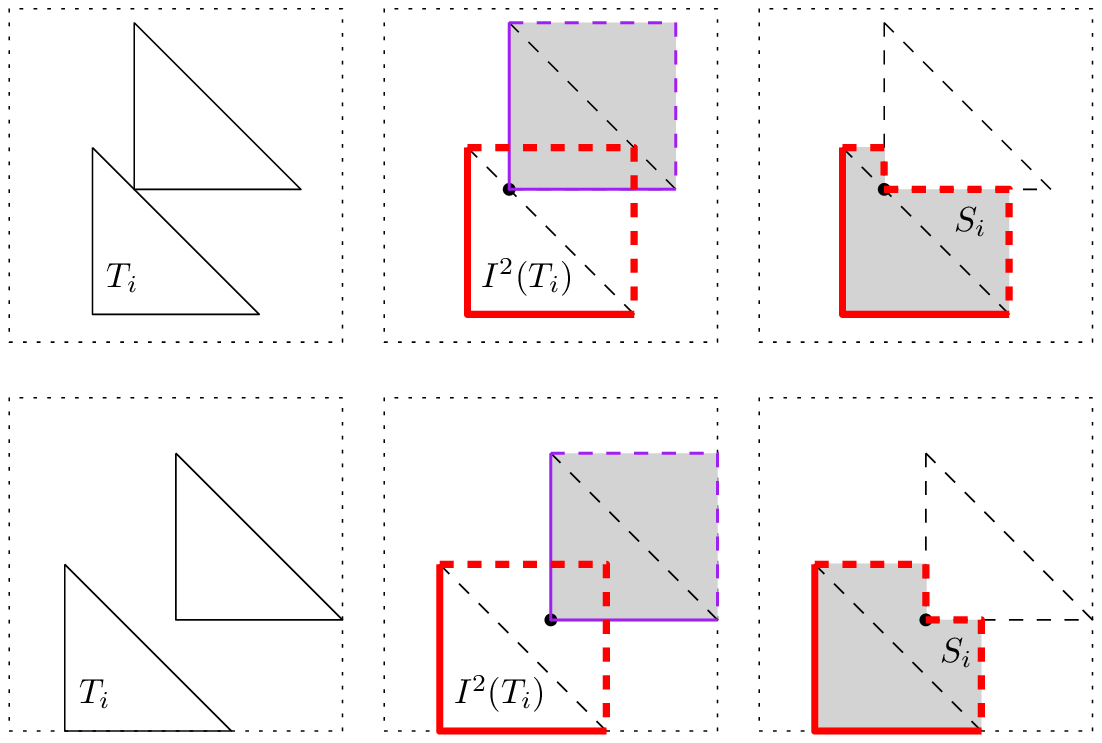}
   \caption{Two examples to illustrate the construction of stair polygons $S_i$ in a $1$-fold packing. Only triangles which press $T_i$ are shown.}\label{onefold}
\end{figure}

\begin{figure}[!ht]
  \centering
    \includegraphics[scale=.8]{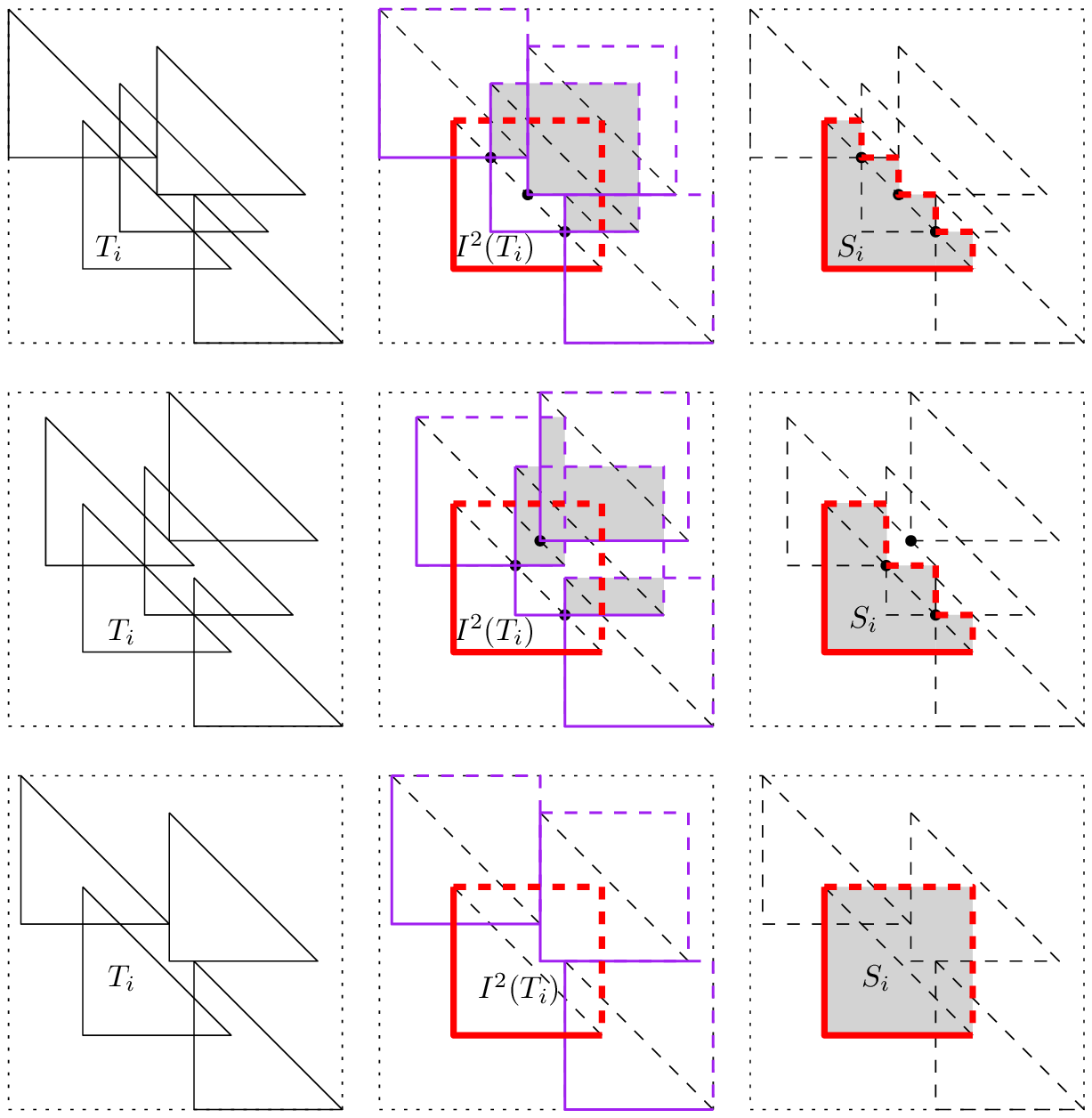}
   \caption{Three examples to illustrate the construction of stair polygons $S_i$ in a $2$-fold packing. Only triangles which press $T_i$ are shown.}\label{twofold}
\end{figure}

We have the following lemmas.
\begin{lem}\label{int_T_intersect_U}
$Int(T_i)\cap U_i=\emptyset$.
\end{lem}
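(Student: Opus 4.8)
The plan is to show that every point of $\mathrm{Int}(T_i)$ avoids $U_i$, by contradiction. Suppose $u\in\mathrm{Int}(T_i)\cap U_i$. By the definition of $U_i$, there are $k$ distinct triangles $T_{i_1},\ldots,T_{i_k}\in\mathcal{C}_i$ (so each presses $T_i$) with $I^2(T_{i_1})\cap\cdots\cap I^2(T_{i_k})\neq\emptyset$, and $u\in R(v(I^2(T_{i_1})\cap\cdots\cap I^2(T_{i_k})))$. Write $q=v(I^2(T_{i_1})\cap\cdots\cap I^2(T_{i_k}))$; then $q\in I^2(T_{i_1})\cap\cdots\cap I^2(T_{i_k})$ and $u\in R(q)$, which is exactly the statement that $q\prec u$ or $q=u$ (comparing coordinatewise in the sense of $R(\cdot)$ — here I need the small observation that $p\in R(q)$ implies $q\prec p$ or $q=p$, which follows directly from the definition of $\prec$ and $R$).

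First I would record the coordinatewise monotonicity: if $(x_0,y_0)\le(x,y)$ coordinatewise then $(x_0,y_0)\prec(x,y)$ or they are equal; in particular $q\in R(q')$ forces $q'\preceq q$. Combined with the fact that $q\in I^2(T_{i_j})$ and $q=v(I^2(T_{i_1})\cap\cdots\cap I^2(T_{i_k}))$, and that $v(I^2(T_{i_j}))\preceq q$ for each $j$ (since $q$ lies in each square and $v$ of a square is its $\prec$-minimal point). The key point I want is that $u$, lying in $R(q)$ and hence satisfying $q\preceq u$, still lies in each $I^2(T_{i_j})$: indeed $u\in\mathrm{Int}(T_i)\subset I^2(T_i)$, but I actually need $u\in I^2(T_{i_j})$. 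This is where I must be a little careful — $u$ need not be in $I^2(T_{i_j})$ just from $q\preceq u$. However, the construction only removes $R(q)$ from $I^2(T_i)$, so the relevant points $u$ are those with $u\in I^2(T_i)$ and $u\in R(q)$; I then want to derive a contradiction with $u\in\mathrm{Int}(T_i)$. The cleanest route: since each $T_{i_j}$ presses $T_i$, we have $v(T_i)\prec v(T_{i_j})$, equivalently (since $v(T')=v(I^2(T'))$ for translates of $T$) $v(I^2(T_i))\prec v(I^2(T_{i_j}))\preceq q$. So $q$ is a point of $I^2(T_i)$ with $v(I^2(T_i))\prec q$, and $u\in R(q)\cap I^2(T_i)$ with $q\preceq u$.

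Now I invoke Lemma~\ref{hypothenuse} on $T_{i_1}$, say: $u\in\mathrm{Int}(T_i)$ means $u$ is strictly inside $T_i$; I claim $u\notin I^2(T_{i_1})$ is impossible and $u\in I^2(T_{i_1})$ leads to a contradiction. The right way to see this is via Lemma~\ref{I_cut_T_must_cut} and Lemma~\ref{hypothenuse}: since $q\in I^2(T_{i_1})\cap\cdots\cap I^2(T_{i_k})$ and each $T_{i_j}$ presses $T_i$, Lemma~\ref{I_cut_T_must_cut} (with $T_i$ playing the role of $T_{n+1}$) shows that $q\in I^2(T_{i_1})\cap\cdots\cap I^2(T_{i_k})$ together with $q\in T_i$ would force $q\in T_{i_1}\cap\cdots\cap T_{i_k}\cap T_i$; but more directly, since $q\preceq u$ and $u\in\mathrm{Int}(T_i)$, Lemma~\ref{hypothenuse} gives $u\prec u'$ for $u'\in H(T_i)$, hence $q\prec u'$, hence $q\in T_i$, and then $q\in\mathrm{Int}(T_{i_1})\cap\cdots\cap\mathrm{Int}(T_{i_k})\cap\mathrm{Int}(T_i)$ after a small perturbation — giving $k+1$ triangles of the family with a common interior point, contradicting that $\mathcal{T}$ is a $k$-fold packing. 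The main obstacle is precisely this last perturbation argument: one has a common point $q$ (or $u$) lying in closures, and must promote it to a common \emph{interior} point of all $k+1$ triangles to contradict $k$-fold packing. Because $u\in\mathrm{Int}(T_i)$ is strict and $q\preceq u$ with $q$ in the closures $T_{i_j}$, one can slide slightly in the $\prec$-increasing direction from $q$ toward $u$ to land in the interiors of all the $T_{i_j}$ simultaneously while staying in $\mathrm{Int}(T_i)$; making this simultaneous-interior claim precise (uniform in the finitely many $j$) is the one genuinely delicate step, and I would handle it by choosing the perturbation small enough and using openness of each $\mathrm{Int}(T_{i_j})$ together with the fact that $u$ itself is an interior point of $T_i$ strictly $\succeq q$.
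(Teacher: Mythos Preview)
Your argument has a genuine gap at the step where you assert ``$q$ is a point of $I^2(T_i)$.'' From $v(I^2(T_i))\prec v(I^2(T_{i_j}))\preceq q$ you only get a $\prec$-inequality, not a coordinatewise one, so $q$ need not lie in $I^2(T_i)$. Concretely, take $k=1$, $T_i=T$ and $T_{i_1}=T+(-0.3,0.5)$. Then $v(T_i)=(0,0)\prec(-0.3,0.5)=v(T_{i_1})$, the squares meet, and $q=v(I^2(T_{i_1}))=(-0.3,0.5)\notin I^2(T_i)=[0,1)^2$. The point $u=(0.1,0.6)$ lies in $\mathrm{Int}(T_i)\cap R(q)$, so this configuration really occurs in your contradiction setup. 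Because $q\notin I^2(T_i)$, Lemma~\ref{hypothenuse} cannot be invoked to conclude $q\in T_i$, and the chain leading to ``$q\in T_i$'' collapses. The subsequent perturbation (``slide from $q$ toward $u$'') is also not guaranteed to land in all interiors simultaneously: along the segment $q\to u$ the first coordinate may stay equal to some $x_{i_j}$, so you never enter $\mathrm{Int}(T_{i_j})$; and near $q$ you may still have $p_x<x_i$, so you are not yet in $\mathrm{Int}(T_i)$.

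The paper's proof avoids working with $q$ altogether. It first observes that $\mathrm{Int}(T_i)\cap\bigl(I^2(T_{i_1})\cap\cdots\cap I^2(T_{i_k})\bigr)\neq\emptyset$ (this uses that each $I^2(T_{i_j})$ overlaps $I^2(T_i)$, so the intersection of the $k$ squares reaches past $(x_i,y_i)$ in both coordinates, while its lower-left corner $q$ satisfies $q_x+q_y<x_i+y_i+1$). Picking a point $w$ in that intersection, Lemma~\ref{I_cut_T_must_cut} gives $w\in T_{i_1}\cap\cdots\cap T_{i_k}$, and since $v(T_i)\prec v(T_{i_j})$ forces the hypotenuse of $T_i\cap T_{i_1}\cap\cdots\cap T_{i_k}$ to lie on $H(T_i)$, the point $w\in\mathrm{Int}(T_i)$ lies strictly below it, so the intersection triangle has nonempty interior, contradicting the $k$-fold packing. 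If you want to repair your approach, replace the false claim by this: show that $\max(x_i,q_x)+\max(y_i,q_y)<x_i+y_i+1$ (check the four cases using $q_x\le u_x<x_i+1$, $q_y\le u_y<y_i+1$, and $q_x+q_y\le u_x+u_y<x_i+y_i+1$), and then take $p=(\max(x_i,q_x)+\varepsilon,\max(y_i,q_y)+\varepsilon)$ for small $\varepsilon>0$; this $p$ lies in $\mathrm{Int}(T_i)\cap\mathrm{Int}(T_{i_1})\cap\cdots\cap\mathrm{Int}(T_{i_k})$.
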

\begin{proof}
Assume that $Int(T_i)\cap U_i\neq\emptyset$. By the definition of $U_i$, it can be deduced that there exist $T_{i_1},\ldots,T_{i_k}\in\mathcal{C}_i$ such that $Int(T_i)\cap(I^2(T_{i_1})\cap\cdots\cap I^2(T_{i_k}))\neq \emptyset$. By Lemma \ref{I_cut_T_must_cut}, we know that $Int(T_i)\cap(I^2(T_{i_1})\cap\cdots\cap I^2(T_{i_k}))\subset Int(T_i)\cap T_{i_1}\cap\cdots\cap T_{i_k}$, and hence $Int(T_i)\cap T_{i_1}\cap\cdots\cap T_{i_k}\neq \emptyset$. Since $v(T_i)\prec v(T_{i_j})$ for all $j=1,\ldots,k$, one can see that $H(T_i\cap T_{i_1}\cap\cdots\cap T_{i_k})\subset H(T_i)$. From $Int(T_i)\cap T_{i_1}\cap\cdots\cap T_{i_k}\neq \emptyset$, we know that $Int(T_i)\cap Int(T_{i_1})\cap\cdots\cap Int(T_{i_k})\neq \emptyset$. This is impossible, since $\mathcal{T}$ is a $k$-fold packing of $\mathbb{R}^2$.
\end{proof}

\begin{lem}\label{S_i_stair_polygon}
$S_i$ is a half-open stair polygon containing $Int(T_i)$.
\end{lem}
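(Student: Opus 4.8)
The plan is to prove the two assertions of the lemma separately: the containment $Int(T_i)\subseteq S_i$, which is immediate, and the structural claim, which needs a short analysis of the set we are removing from the square. For the containment, note $Int(T)\subseteq Int(I^2)\subseteq I^2$, so $Int(T_i)\subseteq I^2(T_i)$; combined with Lemma \ref{int_T_intersect_U}, which gives $Int(T_i)\cap U_i=\emptyset$, this yields $Int(T_i)\subseteq I^2(T_i)\setminus U_i=S_i$.

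For the shape, write $T_i=T+(x_i,y_i)$, so $I^2(T_i)=[x_i,x_i+1)\times[y_i,y_i+1)$. Since $\mathcal{C}_i$ is finite, $U_i$ is a finite union of north-east quadrants $R(p,q)$, where for each admissible $k$-tuple the point $(p,q)=v(I^2(T_{i_1})\cap\cdots\cap I^2(T_{i_k}))$ is the south-west corner of that half-open rectangle, by the same elementary observation that gave $v(I^2+(x,y))=(x,y)$. Discarding the quadrants $R(p,q)$ that miss $I^2(T_i)$ altogether, one is left with finitely many $R(p_1,q_1),\ldots,R(p_n,q_n)$ and $S_i=I^2(T_i)\setminus\bigcup_j R(p_j,q_j)$. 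The crucial point is that each surviving corner lies in the \emph{open} square $(x_i,x_i+1)\times(y_i,y_i+1)$: if some $p_j\le x_i$, then every point of $Int(T_i)$ already has $x>x_i\ge p_j$, so $Int(T_i)\cap R(p_j,q_j)=\emptyset$ forces $q_j\ge y_i+1$, whence $R(p_j,q_j)\cap I^2(T_i)=\emptyset$, contradicting survival; symmetrically $q_j>y_i$.

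Now I would read off $S_i$ slice by slice. For $t\in[x_i,x_i+1)$, a point $(t,s)$ lies in $S_i$ exactly when $s\in[y_i,y_i+1)$ and $s<q_j$ for every $j$ with $p_j\le t$; hence the vertical slice of $S_i$ at $x=t$ is $[y_i,h(t))$ with $h(t)=\min\big(\{y_i+1\}\cup\{q_j:p_j\le t\}\big)$. Because all $q_j>y_i$ we have $h(t)>y_i$, so every slice is nonempty and the horizontal extent of $S_i$ is exactly $[x_i,x_i+1)$ (outside this range $S_i$ is empty, being a subset of $I^2(T_i)$); because all $p_j>x_i$ we have $h(x_i)=y_i+1$, so the entire left edge of the square lies in $S_i$. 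The function $h$ is a nonincreasing step function, constant on intervals of the form $[a,b)$, with finitely many jumps, each occurring at some $p_j\in(x_i,x_i+1)$. Discarding the jump points at which $h$ does not actually decrease and relabelling, I get $x_i=x_0<x_1<\cdots<x_r<x_{r+1}:=x_i+1$ and $y_i+1=y_0>y_1>\cdots>y_r>y_{r+1}:=y_i$ with $h\equiv y_j$ on $[x_j,x_{j+1})$, so that $S_i=\bigcup_{j=0}^r[x_j,x_{j+1})\times[y_{r+1},y_j)$, i.e. a half-open $r$-stair polygon in the sense of Definition \ref{def_stair_polygon} (the case $r=0$, where $S_i=I^2(T_i)$, is allowed and occurs when there are no jumps).

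The one genuinely delicate step is the claim in the second paragraph that the surviving quadrant corners sit strictly inside the open square — equivalently, that removing $U_i$ erodes neither the bottom nor the left edge of $I^2(T_i)$. This is precisely what makes the slice data clean ($h(x_i)=y_i+1$ and $h>y_i$ throughout), hence what forces a stair-polygon shape rather than a more complicated region, and it is the only place where the packing hypothesis enters, through Lemma \ref{int_T_intersect_U} together with the small limiting argument near the (excluded) left and bottom edges. Everything else is routine bookkeeping about finite unions of north-east quadrants.
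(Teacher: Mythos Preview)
Your proof is correct and follows the same route as the paper—containment via Lemma~\ref{int_T_intersect_U}, and the stair-polygon shape from the finiteness of $\mathcal{C}_i$—except that you actually supply the details the paper dismisses as ``obvious'' in a single clause. Your extra observation that each surviving quadrant corner lies in the open square $(x_i,x_i+1)\times(y_i,y_i+1)$, so that $v(S_i)=(x_i,y_i)$ and the top-left corner of $S_i$ is $(x_i,y_i+1)$, is in fact tacitly used later in the paper (e.g.\ in the proof of Lemma~\ref{x_coordinate_on_stair_point}).
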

\begin{proof}
 We note that $\mathcal{C}_i$ is finite, hence it is obvious that $S_i$ is a half-open stair polygon. By Lemma \ref{int_T_intersect_U}, we have $Int(T_i)\subset I^2(T_i)\setminus U_i=S_i$.
\end{proof}

We may assume, without loss of generality, that $S_i$ is a half-open $r_i$-stair polygon and
$$S_i=\bigcup_{j=0}^{r_i}[x_j^{(i)},x_{j+1}^{(i)})\times[y_{r_i+1}^{(i)},y_j^{(i)}),$$
where $x_0^{(i)}<x_1^{(i)}<\cdots<x_{r_i+1}^{(i)}$ and $y_0^{(i)}>y_1^{(i)}>\cdots>y_{r_i+1}^{(i)}$ (Fig. \ref{Si}). Let
$$Z(S_i)=\{(x_j^{(i)},y_j^{(i)}): j=1,\ldots,r_i\}.$$

\begin{figure}[!ht]
  \centering
    \includegraphics[scale=.90]{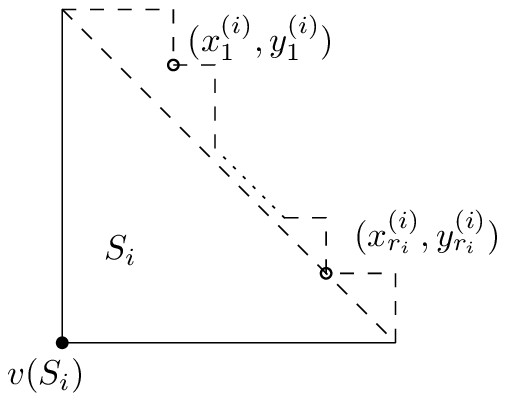}
   \caption{$S_i$}\label{Si}
\end{figure}

\begin{lem}\label{x_coordinate_on_stair_point}
For every $(x',y')\in Z(S_i)$, there exists $j\in\{1,\ldots,N\}\setminus\{i\}$ such that $(x',y')\in S_j$ and $x'=x_0^{(j)}$ where $x_0^{(j)}$ is the $x$-coordinate of $v(S_j)$ (Fig. \ref{SiSj}).
\end{lem}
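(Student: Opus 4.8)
The plan is to show, in order, that $(x',y')$ is one of the corners of the north‑east quadrants used to build $U_i$, that a carefully chosen $T_j\in\mathcal{T}$ with $x(v(T_j))=x'$ satisfies $(x',y')\in S_j$, and that this forces $(x',y')$ into the leftmost column of $S_j$. First I would prove that $(x',y')=v(I^2(T_{i_1})\cap\cdots\cap I^2(T_{i_k}))$ for some distinct $T_{i_1},\dots,T_{i_k}\in\mathcal{C}_i$ with $I^2(T_{i_1})\cap\cdots\cap I^2(T_{i_k})\neq\emptyset$. Since $S_i$ is half-open and $(x',y')\in Z(S_i)$ is a reflex corner, $(x',y')$ lies in $\overline{S_i}\subseteq I^2(T_i)$ but not in $S_i$, so $(x',y')\in U_i$; on the other hand, for all small $\varepsilon>0$ the points $(x'-\varepsilon,y')$ and $(x',y'-\varepsilon)$ lie in two consecutive columns of $S_i$, hence outside $U_i$. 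Since $\mathcal{C}_i$ is finite, $U_i$ is a finite union of closed quadrants $R(v(I^2(T_{i_1})\cap\cdots\cap I^2(T_{i_k})))$, and a point of such a union that belongs to it while its left and lower neighbours do not must equal the corner of one of these quadrants. A direct computation gives $v(I^2(T_{i_1})\cap\cdots\cap I^2(T_{i_k}))=(\max_m x(v(T_{i_m})),\max_m y(v(T_{i_m})))$, so $x'=\max_m x(v(T_{i_m}))$ and $y'=\max_m y(v(T_{i_m}))$; in particular some $T_{i_{m_0}}$ has $x(v(T_{i_{m_0}}))=x'$, and since $(x',y')\in I^2(T_{i_{m_0}})$ its second coordinate lies in $[y'-1,y']$.

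Next I would let $\mathcal{A}$ be the set of all $T'\in\mathcal{T}$ with $x(v(T'))=x'$ and $y(v(T'))\in[y'-1,y']$. It contains $T_{i_{m_0}}$, so is finite and nonempty, and I would take $T_j\in\mathcal{A}$ whose second coordinate $\gamma:=y(v(T_j))$ is largest. Then $x_0^{(j)}=x(v(T_j))=x'$ (since $Int(T_j)\subseteq S_j\subseteq I^2(T_j)$, all three have leftmost $x$-coordinate $x'$), and $(x',y')\in I^2(T_j)$ because $y'-\gamma\in[0,1]$. Moreover $T_j\neq T_i$: were $T_i\in\mathcal{A}$, then since $T_{i_{m_0}}$ presses $T_i$ and both have first coordinate $x'$, the definition of $\prec$ would force $y(v(T_i))<y(v(T_{i_{m_0}}))\le\gamma$, so $T_i$ could not attain the maximum defining $T_j$; hence $j\neq i$ by normality of $\mathcal{T}$. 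Also $T_j\in\mathcal{C}_i$: one has $v(T_i)\prec v(T_j)$ (from $v(T_i)\prec v(T_{i_{m_0}})$ together with $v(T_{i_{m_0}})\prec v(T_j)$, or with $v(T_{i_{m_0}})=v(T_j)$, which by normality gives $T_j=T_{i_{m_0}}$) and $(x',y')\in I^2(T_i)\cap I^2(T_j)$. It therefore suffices to prove $(x',y')\notin U_j$; then $(x',y')\in I^2(T_j)\setminus U_j=S_j$ and, lying on the left edge of $S_j$, it lies in the leftmost column, which is the assertion.

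To prove $(x',y')\notin U_j$, suppose for contradiction $(x',y')\in U_j$. Then there are distinct $T_{j_1},\dots,T_{j_k}\in\mathcal{C}_j$ with $I^2(T_{j_1})\cap\cdots\cap I^2(T_{j_k})\neq\emptyset$ and, by the computation above, $x(v(T_{j_l}))\le x'=x(v(T_j))$ and $y(v(T_{j_l}))\le y'$ for every $l$. Since $T_{j_l}$ presses $T_j$ we have $v(T_j)\prec v(T_{j_l})$, and because $x(v(T_{j_l}))\le x(v(T_j))$ the definition of $\prec$ forces $x(v(T_j))+y(v(T_j))<x(v(T_{j_l}))+y(v(T_{j_l}))$, whence $y(v(T_{j_l}))>y(v(T_j))=\gamma\ge y'-1$. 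Using that each $I^2(T_{j_l})$ meets $I^2(T_j)$, one checks $(x',y')\in I^2(T_{j_l})$; combined with $(x',y')\in I^2(T_i)$ and $v(T_i)\prec v(T_j)\prec v(T_{j_l})$, this shows $T_{j_l}\in\mathcal{C}_i$. Hence $\{T_{j_1},\dots,T_{j_k}\}$ is one of the $k$-tuples occurring in the definition of $U_i$, so $R(v(I^2(T_{j_1})\cap\cdots\cap I^2(T_{j_k})))\subseteq U_i$. The corner of this quadrant is $(\max_l x(v(T_{j_l})),\max_l y(v(T_{j_l})))$ and its second coordinate is $\le y'$. If its first coordinate were $<x'$, then for small $\varepsilon>0$ the point $(x'-\varepsilon,y')$ would lie in this quadrant, hence in $U_i$ — contradicting the first paragraph. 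So some $T_{j_{l^*}}$ has $x(v(T_{j_{l^*}}))=x'$; but then $y(v(T_{j_{l^*}}))\in(\gamma,y']\subseteq[y'-1,y']$, so $T_{j_{l^*}}\in\mathcal{A}$ has second coordinate strictly larger than $\gamma$, contradicting the choice of $T_j$. Either way $(x',y')\notin U_j$, which completes the proof.

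The parts I expect to be delicate are the two points where a strict‑versus‑equal alternative must be settled — the possible coincidence $v(T_{i_{m_0}})=v(T_j)$ in the second paragraph, and the possibility $y(v(T_{j_l}))=y'$ (equivalently, $(x',y')$ sitting on an edge of $I^2(T_{j_l})$, or at the apex of $T_j$ when $\gamma=y'-1$) in the third. Both are resolved by normality of $\mathcal{T}$ (no two translates coincide) together with careful use of the half-open convention defining the $S_i$ and $Z(S_i)$. Verifying, in every such edge case, that each witness $T_{j_l}$ really presses both $T_j$ and $T_i$, that $(x',y')$ really lies in the relevant squares, and that the transported $k$-tuple $\{T_{j_1},\dots,T_{j_k}\}$ is genuinely one of those defining $U_i$, is the main obstacle I anticipate.
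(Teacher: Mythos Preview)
Your argument is correct. Both you and the paper begin by recognising that a stair point $(x',y')\in Z(S_i)$ must be the corner $v\bigl(\bigcap_m I^2(T_{i_m})\bigr)$ of one of the quadrants defining $U_i$, and both then select an index $j$ with $x(v(T_j))=x'$. The paper, however, simply takes $j$ to be any $i_m$ from that particular $k$-tuple realising the maximum in the $x$-coordinate, and then disposes of the inclusion $(x',y')\in S_j$ in one line via the chain $(x',y')\in T_j\cap I^2(T_j)\subset S_j$; the unstated reason this works is that $(x',y')$ sits on the \emph{left edge} of $T_j$, and since each $R(\cdot)$ in $U_j$ is upward--rightward closed, any such point lying in $U_j$ would drag a nearby point of $Int(T_j)$ into $U_j$, contradicting Lemma~\ref{int_T_intersect_U}. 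Your route avoids invoking Lemma~\ref{int_T_intersect_U} altogether: you instead choose $T_j$ with $\gamma=y(v(T_j))$ \emph{maximal} among candidates in $\mathcal{A}$, and then run a self-contained contradiction showing that any witnessing $k$-tuple for $(x',y')\in U_j$ transports back into $\mathcal{C}_i$ and produces either a violation of the stair-point condition $(x'-\varepsilon,y')\notin U_i$ or a member of $\mathcal{A}$ with $y$-coordinate exceeding $\gamma$. Your approach is longer but more explicit about the combinatorics and does not lean on the earlier lemma; the paper's is shorter but hides the key monotonicity step. Both are valid; the maximal-choice trick you use is a nice way to make the termination of what would otherwise be an iterative argument immediate.
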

\begin{proof}
By the definitions of $S_i$ and $Z(S_i)$, it is not hard to see that there exist $T_{i_1},\ldots,T_{i_k}\in\mathcal{C}_i$ such that $I^2(T_{i_1})\cap\cdots\cap I^2(T_{i_k})\neq\emptyset$ and $v(I^2(T_{i_1})\cap\cdots\cap I^2(T_{i_k}))=(x',y')$. This implies that there is a $j\in\{i_1,\ldots,i_k\}$ such that the $x$-coordinate of $v(I^2(T_{j}))$ is $x'$. Clearly, $(x',y')\in T_j\cap I^2(T_j)\subset S_j$ and $v(S_j)=v(I^2(T_{j}))$.
\end{proof}

\begin{figure}[!ht]
  \centering
    \includegraphics[scale=.80]{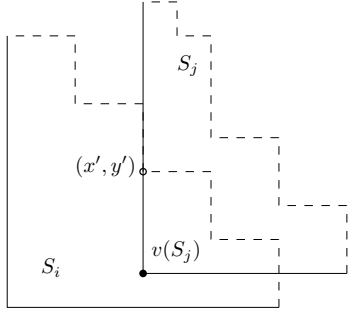}
   \caption{$S_i$ and $S_j$}\label{SiSj}
\end{figure}

\begin{lem}\label{not_exceed_k_1}
Suppose that $i_1,\ldots,i_{k+1}$ are $k+1$ distinct positive integers in $\{1,2,\ldots,N\}$. Then
$$\bigcap_{j=1}^{k+1}S_{i_j}=\emptyset.$$
\end{lem}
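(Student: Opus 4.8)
The plan is to argue by contradiction: suppose that some point $q$ lies in $\bigcap_{j=1}^{k+1} S_{i_j}$. The first step is to understand what membership $q\in S_{i_j}=I^2(T_{i_j})\setminus U_{i_j}$ means. It gives $q\in I^2(T_{i_j})$ for all $j=1,\dots,k+1$, so in particular $I^2(T_{i_1})\cap\cdots\cap I^2(T_{i_{k+1}})\neq\emptyset$. By Lemma \ref{cut_by_the_others}, after relabelling we may assume there is an index, say $i_{k+1}$, such that $T_{i_j}$ presses $T_{i_{k+1}}$ for every $j=1,\dots,k$; that is, $T_{i_1},\dots,T_{i_k}\in\mathcal{C}_{i_{k+1}}$.

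Next I would exploit $q\notin U_{i_{k+1}}$. By the definition of $U_{i_{k+1}}$, since $T_{i_1},\dots,T_{i_k}$ are $k$ distinct members of $\mathcal{C}_{i_{k+1}}$ whose squares have common point $q$, the set $R\bigl(v(I^2(T_{i_1})\cap\cdots\cap I^2(T_{i_k}))\bigr)$ is one of the sets in the union defining $U_{i_{k+1}}$. Writing $(x^*,y^*)=v(I^2(T_{i_1})\cap\cdots\cap I^2(T_{i_k}))$ for the $\prec$-minimal point of that intersection, the fact that $q\notin U_{i_{k+1}}$ forces $q\notin R(x^*,y^*)$, i.e. $q$ fails $x\ge x^*$ or $y\ge y^*$. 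On the other hand $q$ itself lies in $I^2(T_{i_1})\cap\cdots\cap I^2(T_{i_k})$, so by minimality of $(x^*,y^*)$ in this intersection (with respect to $\prec$) one has $(x^*,y^*)\preceq q$; combining this with the failure of $q\in R(x^*,y^*)$ should be shown to be impossible, because for axis-parallel squares the $\prec$-minimal corner of the intersection is exactly its lower-left corner, and every point of the intersection lies weakly up-and-to-the-right of it — hence in $R(x^*,y^*)$. This yields the contradiction $q\in U_{i_{k+1}}$, and therefore $\bigcap_{j=1}^{k+1}S_{i_j}=\emptyset$.

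The main obstacle is the geometric bookkeeping in the second paragraph: one must check carefully that $v$ of an intersection of finitely many translated squares $I^2(\cdot)$ is its lower-left vertex, and that $q\in I^2(T_{i_1})\cap\cdots\cap I^2(T_{i_k})$ together with $q\notin R(v(\cdot))$ is genuinely contradictory. This is where the precise shape of $I^2(T')$ (an axis-parallel unit square) and the definition of the order $\prec$ interact; the subtlety is the mismatch between the strict partial order $\prec$ (which breaks ties by $x$-coordinate) and the coordinatewise order defining $R(x_0,y_0)$, so I would verify that on the common intersection of the squares the two notions agree at the extreme point. Once that lemma about squares is in place, the rest is a short chain of definitional unwindings plus the single application of Lemma \ref{cut_by_the_others}. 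I would state and prove the needed fact about $v$ of an intersection of squares as a one-line observation, then present the contradiction argument as above.
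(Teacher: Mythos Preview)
Your proposal is correct and follows essentially the same route as the paper. The paper's proof is just your argument stated contrapositively as a chain of set inclusions: after invoking Lemma~\ref{cut_by_the_others} to arrange that $T_{i_1},\dots,T_{i_k}\in\mathcal{C}_{i_{k+1}}$, it observes directly that
\[
I^2(T_{i_1})\cap\cdots\cap I^2(T_{i_k})\ \subset\ R\bigl(v(I^2(T_{i_1})\cap\cdots\cap I^2(T_{i_k}))\bigr)\ \subset\ U_{i_{k+1}},
\]
whence $\bigcap_{j=1}^{k+1}S_{i_j}\subset (I^2(T_{i_1})\cap\cdots\cap I^2(T_{i_{k+1}}))\setminus U_{i_{k+1}}=\emptyset$. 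The ``obstacle'' you flag---that $v$ of the intersection of the $I^2(\cdot)$ is its lower-left corner, so the intersection sits inside $R(v(\cdot))$---is indeed the only content, and it is immediate since a nonempty intersection of axis-parallel half-open unit squares is a half-open axis-parallel rectangle whose $\prec$-minimum is its lower-left vertex; the mismatch between $\prec$ and the coordinatewise order that worries you does not arise here, because on such a rectangle the lower-left vertex is simultaneously the $\prec$-minimum and the coordinatewise minimum.
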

\begin{proof}
If $I^2(T_{i_1})\cap\cdots\cap I^2(T_{i_{k+1}})=\emptyset$, then it is obvious that $S_{i_1}\cap\cdots\cap S_{i_{k+1}}= \emptyset$. Assume that $I^2(T_{i_1})\cap\cdots\cap I^2(T_{i_{k+1}})\neq \emptyset$. By Lemma \ref{cut_by_the_others}, we may assume, without loss of generality, that $T_{i_j}$ presses $T_{i_{k+1}}$ for all $j=1,\ldots,k$. Therefore $I^2(T_{i_1})\cap\cdots\cap I^2(T_{i_{k}})\subset R(v(I^2(T_{i_1})\cap\cdots\cap I^2(T_{i_k})))\subset U_{i_{k+1}}$. Hence
$$\bigcap_{j=1}^{k+1}S_{i_j}=(I^2(T_{i_1})\cap\cdots\cap I^2(T_{i_{k+1}}))\setminus(U_{i_1}\cup\cdots\cup U_{i_{k+1}})= \emptyset.$$
\end{proof}

\begin{lem}\label{S_i_exact_k_fold}
$\{S_1,S_2,\ldots,S_N\}$ is a $k$-fold packing of $lI^2$.
\end{lem}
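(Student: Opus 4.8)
The plan is to check the two defining conditions of a $k$-fold packing of $lI^2$ directly; essentially all the substance has already been proved, so this will be a short bookkeeping argument resting on Lemma~\ref{not_exceed_k_1}.

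First I would verify the containment $\bigcup_{i=1}^{N}S_i\subseteq lI^2$. Since $S_i=I^2(T_i)\setminus U_i\subseteq I^2(T_i)$, it is enough to show $I^2(T_i)\subseteq lI^2$ for every $i$. Write $T_i=T+(x_i,y_i)$. Because $\mathcal{T}$ is a packing of $lI^2=[0,l)^2$, the closed triangle $T_i$ lies in $[0,l)^2$, so in particular its three vertices $(x_i,y_i)$, $(x_i+1,y_i)$, $(x_i,y_i+1)$ do; this gives $x_i\geq 0$, $y_i\geq 0$, $x_i+1<l$ and $y_i+1<l$, whence $I^2(T_i)=[x_i,x_i+1)\times[y_i,y_i+1)\subseteq[0,l)\times[0,l)=lI^2$.

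Second, I would show that no point of $lI^2$ lies in the interiors of more than $k$ of the sets $S_i$. In fact Lemma~\ref{not_exceed_k_1} gives the stronger statement that the intersection of any $k+1$ distinct sets among $S_1,\ldots,S_N$ is empty, so no point of the plane --- in particular no point of $lI^2$ --- belongs to more than $k$ of the $S_i$; since $Int(S_i)\subseteq S_i$, a fortiori no point belongs to the interiors of more than $k$ of them. Together with the first step this shows that $\{S_1,\ldots,S_N\}$ is a $k$-fold packing of $lI^2$.

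I do not expect a genuine obstacle: the content of the lemma is precisely the $(k+1)$-wise disjointness already recorded in Lemma~\ref{not_exceed_k_1}, combined with the trivial inclusions $S_i\subseteq I^2(T_i)\subseteq lI^2$ and the fact (from Lemmas~\ref{int_T_intersect_U} and~\ref{S_i_stair_polygon}) that each $S_i$ is an honest half-open stair polygon. The only places calling for a little care are the boundary bookkeeping in the inclusion $I^2(T_i)\subseteq lI^2$ --- one must use that the translates sit strictly inside the half-open square, i.e. $x_i+1<l$ rather than merely $x_i+1\leq l$ --- and the elementary observation that ``the interiors of at most $k$ sets'' is implied by the ``at most $k$ sets'' that Lemma~\ref{not_exceed_k_1} actually delivers.
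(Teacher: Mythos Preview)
Your proposal is correct and follows exactly the same approach as the paper: the paper's proof simply notes that $T_i\subset lI^2$ makes $S_i\subset lI^2$ ``obvious'' and then invokes Lemma~\ref{not_exceed_k_1}. You merely spell out the coordinate bookkeeping for the containment $I^2(T_i)\subseteq lI^2$ that the paper leaves implicit.
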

\begin{proof}
Since $T_i\subset lI^2$, it is obvious that $S_i\subset lI^2$. Hence, the result follows immediately from Lemma \ref{not_exceed_k_1}.
\end{proof}

\begin{lem} \label{to_prove_no_R_cut_by_other}
Let $L_i=(\overline{S_i}\setminus S_i)\cap I^2(T_i)$. If $T_i$ presses $T_j$, then $L_i\cap S_j=\emptyset$.
\end{lem}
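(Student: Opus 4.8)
The plan is to argue by contradiction: suppose $T_i$ presses $T_j$ and there is a point $u \in L_i \cap S_j$. Recall $L_i = (\overline{S_i}\setminus S_i)\cap I^2(T_i)$; since $S_i = I^2(T_i)\setminus U_i$ is a half-open $r_i$-stair polygon, the set $\overline{S_i}\setminus S_i$ consists of the ``missing'' top and right boundary edges of the staircase, so a point $u$ in $L_i$ lies in $I^2(T_i)$ but just outside $S_i$ — intuitively, $u$ sits on the boundary of one of the removed regions $R(v(I^2(T_{i_1})\cap\cdots\cap I^2(T_{i_k})))$ that carved a step out of $I^2(T_i)$, or on the upper/right edge of $I^2(T_i)$ itself. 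I would first dispose of the latter case: if $u$ lies on the top or right edge of $I^2(T_i)$, then because $T_i$ presses $T_j$ we have $v(T_j)\prec v(T_i)$, which forces $I^2(T_j)$ to sit ``below-left'' of $I^2(T_i)$ in the $\prec$-sense, and one checks $u\notin I^2(T_j)\supseteq S_j$, a contradiction. So the interesting case is that $u$ lies on the boundary of some removed rectangle-corner set $R(p)$ with $p = v(I^2(T_{i_1})\cap\cdots\cap I^2(T_{i_k}))$ for triangles $T_{i_1},\dots,T_{i_k}\in\mathcal{C}_i$.

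The key step is then to use the fact that $u\in L_i$ but $u\notin S_i$ means $u\in U_i$ (more precisely $u$ is a limit of points of $U_i$, and since $U_i$ is a finite union of closed quadrant-sets $R(p)$, in fact $u\in R(p)$ for at least one of the defining tuples). Fix such a tuple, so $p \preceq u$ coordinatewise, i.e. the $x$- and $y$-coordinates of $u$ are both $\geq$ those of $p$, where $p = v(I^2(T_{i_1})\cap\cdots\cap I^2(T_{i_k}))$. Now I want to show $u\in U_j$, which gives $u\notin S_j$, the desired contradiction. Since $T_i$ presses $T_j$, we have $v(T_j)\prec v(T_i)$, so each $T_{i_m}$ (which presses $T_i$, hence $v(T_i)\prec v(T_{i_m})$) satisfies $v(T_j)\prec v(T_{i_m})$ by transitivity of $\prec$; I will need to check that moreover $I^2(T_{i_m})\cap I^2(T_j)\neq\emptyset$ so that $T_{i_m}$ actually presses $T_j$, i.e. $T_{i_m}\in\mathcal{C}_j$. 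This should follow because $u\in I^2(T_j)$ (as $u\in S_j\subseteq I^2(T_j)$) and $u\in R(p)$ with $p$ the $\prec$-minimum corner of $\bigcap_m I^2(T_{i_m})$, and one shows $u$ itself — or a nearby point — lies in each $I^2(T_{i_m})$, using that $u\in \overline{S_i}\cap I^2(T_i)$ lies on the boundary of the carved step created by exactly these $I^2(T_{i_m})$. Then $I^2(T_{i_1})\cap\cdots\cap I^2(T_{i_k})\cap I^2(T_j)\neq\emptyset$ as well, its $\prec$-minimum corner $q$ satisfies $q \preceq u$ coordinatewise (since adding the constraint $I^2(T_j)\ni u$ only pushes the corner toward $u$), hence $u\in R(q)\subseteq U_j$, so $u\notin S_j$.

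The main obstacle I anticipate is the bookkeeping in the previous paragraph: carefully identifying, for a boundary point $u\in L_i$, a specific $k$-tuple of triangles in $\mathcal{C}_i$ whose quadrant $R(p)$ has $u$ on its boundary, and then verifying that this same $k$-tuple lies in $\mathcal{C}_j$ with a quadrant still covering $u$. The subtlety is that $u$ is on the boundary of $S_i$, not in its interior, so I must be slightly careful about whether $u\in R(p)$ (closed quadrant) versus $u$ merely being a limit point — but since the quadrants $R(p)$ are closed and $L_i\subseteq\overline{S_i}$ while $S_i$ excludes exactly $U_i$, a point of $L_i$ outside the top/right edges of $I^2(T_i)$ does lie in $U_i$'s closure, and $U_i$ being a finite union of closed sets equals its own closure there. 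I would also double-check the edge cases where $u$ has $x$-coordinate equal to $x_0^{(i)}$ or $y$-coordinate equal to $y_{r_i+1}^{(i)}$ (the left/bottom edges of the staircase), since there $u$ might be in $L_i$ for a trivial reason related to half-openness of $I^2(T_i)$ rather than to $U_i$; Lemma \ref{hypothenuse} and the press relation should handle these just as in the first paragraph. The rest is a routine application of transitivity of $\prec$ and the monotonicity of the $\prec$-minimum corner under intersection.
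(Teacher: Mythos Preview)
Your approach is essentially the paper's: assume $u\in L_i\cap S_j$, extract a $k$-tuple $T_{i_1},\dots,T_{i_k}\in\mathcal{C}_i$ with $u\in R(p)$ where $p=v\bigl(\bigcap_m I^2(T_{i_m})\bigr)$, show each $T_{i_m}\in\mathcal{C}_j$, and conclude $u\in U_j$, contradicting $u\in S_j$. Two simplifications are worth noting. First, your preliminary case analysis and limit-point worries are unnecessary: since $I^2=[0,1)^2$ is half-open, the top/right edges of $I^2(T_i)$ are excluded, and for any $u\in L_i$ we have $u\in I^2(T_i)$ and $u\notin S_i=I^2(T_i)\setminus U_i$, hence $u\in U_i$ directly---no closures needed. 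Second, to verify $I^2(T_{i_m})\cap I^2(T_j)\neq\emptyset$, the paper does not try to place $u$ (or a nearby point) in each $I^2(T_{i_m})$ as you propose; instead it argues that the corner $p=(x',y')$ itself lies in $I^2(T_j)$. The point $p$ already sits in every $I^2(T_{i_m})$, so this single containment does the job for all $m$ at once. The key observation enabling it is that $p\in U_i$ forces $p\notin Int(T_i)$ (Lemma~\ref{int_T_intersect_U}), which combined with $v(T_j)\prec v(T_i)$ and the coordinatewise inequality $p\leq u\in I^2(T_j)$ pins $p$ inside $I^2(T_j)$. Your route via ``$u$ lies on the boundary of the carved step created by exactly these $I^2(T_{i_m})$'' is vaguer and would require more bookkeeping to make rigorous.
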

\begin{proof}
Assume that there is some point $(x,y)\in L_i\cap S_j$. We have that $(x,y)\in U_i$, and hence there exist $T_{i_1},\ldots,T_{i_k}\in \mathcal{C}_i$ such that $(x,y)\in R(v(I^2(T_{i_1})\cap\cdots\cap I^2(T_{i_k})))$. Let $v(I^2(T_{i_1})\cap\cdots\cap I^2(T_{i_k}))=(x',y')$. It is obvious that $x'\leq x$ and $y'\leq y$. Since $(x,y)\in S_j\subset I^2(T_j)$ and $(x',y')\in I^2(T_i)\setminus Int(T_i)$, one can deduce that $(x',y')\in I^2(T_j)$, and hence $I^2(T_j)\cap I^2(T_{i_s})\neq\emptyset$ for all $s=1,\ldots,k$. For $s=1,\ldots,k$, since $T_i$ presses $T_j$ and $T_{i_s}$ presses $T_i$, we have that $T_{i_s}$ presses $T_j$, i.e., $T_{i_s}\in\mathcal{C}_j$. Therefore $(x,y)\in U_j$, which is a contradiction.
\end{proof}

\begin{figure}[!ht]
  \centering
    \includegraphics[scale=.80]{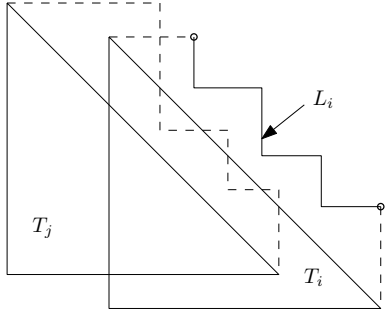}
   \caption{$L_i\cap S_j=\emptyset$}\label{LS}
\end{figure}

\begin{lem}\label{condition_for_cut}
For every $i,j\in\{1,2,\ldots,N\}$, we have $L_i\cap S_j=\emptyset$ or $L_j\cap S_i=\emptyset$.
\end{lem}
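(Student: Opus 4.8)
The plan is to reduce the statement to Lemma~\ref{to_prove_no_R_cut_by_other} by a short case analysis on the mutual position of $T_i$ and $T_j$. First I would dispose of the degenerate case $i=j$: here $L_i\cap S_i\subseteq(\overline{S_i}\setminus S_i)\cap S_i=\emptyset$ directly from the definition of $L_i$, so both alternatives hold trivially. Next, if $I^2(T_i)\cap I^2(T_j)=\emptyset$, then since $L_i\subseteq I^2(T_i)$ and $S_j\subseteq I^2(T_j)$ we get $L_i\cap S_j=\emptyset$ (and symmetrically $L_j\cap S_i=\emptyset$) at once.

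The remaining case is $i\neq j$ with $I^2(T_i)\cap I^2(T_j)\neq\emptyset$. Because $\mathcal{T}$ is a \emph{normal} packing we have $T_i\neq T_j$, so the press dichotomy recorded just after the definition of ``press'' applies: either $T_i$ presses $T_j$ or $T_j$ presses $T_i$. In the first case Lemma~\ref{to_prove_no_R_cut_by_other}, applied to the ordered pair $(i,j)$, yields $L_i\cap S_j=\emptyset$; in the second case the same lemma applied to the pair $(j,i)$ yields $L_j\cap S_i=\emptyset$. Either way at least one of the two intersections is empty, which is exactly the claim.

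I do not expect a genuine obstacle here: the lemma is essentially a repackaging of Lemma~\ref{to_prove_no_R_cut_by_other} together with the fact that, for distinct translates whose surrounding squares meet, one always presses the other. The only points that need care are invoking normality to secure $T_i\neq T_j$ (so that the dichotomy is available) and disposing of the sub-cases $i=j$ and $I^2(T_i)\cap I^2(T_j)=\emptyset$ before appealing to the dichotomy. I would also keep in mind that this lemma is presumably intended to feed into the later bound $\sum_i r_i\le(2k-1)N$: the symmetric conclusion ``$L_i\cap S_j=\emptyset$ or $L_j\cap S_i=\emptyset$'' is the natural input for a double-counting argument over pairs $\{i,j\}$, so it is worth stating it in this symmetric form rather than merely in the one-sided form of Lemma~\ref{to_prove_no_R_cut_by_other}.
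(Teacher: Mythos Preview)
Your proposal is correct and follows essentially the same route as the paper's own proof: dispose of the trivial cases $i=j$ and $I^2(T_i)\cap I^2(T_j)=\emptyset$, then invoke the press dichotomy and Lemma~\ref{to_prove_no_R_cut_by_other}. Your write-up is slightly more explicit (spelling out why the trivial cases hold and noting that normality is what gives $T_i\neq T_j$), but the argument is the same.
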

\begin{proof}
If $I^2(T_i) \cap I^2(T_j)=\emptyset$ or $i=j$, then the result is trivial. When $I^2(T_i)\cap I^2(T_j)\neq \emptyset$ and $i\neq j$, we have that either $T_i$ presses $T_j$ or $T_j$ presses $T_i$. The result follows directly from Lemma \ref{to_prove_no_R_cut_by_other}.
\end{proof}

\begin{lem}\label{more_than_r_k_1}
For $i=1,2,\ldots,N$, let
$$n_i=card\{S_j : v(S_j)\in Int(S_i)\cup Z(S_i), j=1,\ldots,N\}.$$
Then, we have
$$n_i\geq r_i-k+1.$$
\end{lem}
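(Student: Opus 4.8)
The plan is to use the $r_i$ stair points of $S_i$ to produce $r_i$ distinct polygons $S_j$ ($j\neq i$) and to show that all but at most $k-1$ of their vertices $v(S_j)$ lie in $Int(S_i)\cup Z(S_i)$. Fix a stair point $(x_m^{(i)},y_m^{(i)})\in Z(S_i)$. Lemma \ref{x_coordinate_on_stair_point} gives an index $j=j(m)\neq i$ with $(x_m^{(i)},y_m^{(i)})\in S_j$ and with the $x$-coordinate of $v(S_j)$ equal to $x_m^{(i)}$; the proof of that lemma also shows $v(S_j)=v(I^2(T_j))=v(T_j)$ and that $T_j$ presses $T_i$. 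Since distinct stair points of $S_i$ have distinct $x$-coordinates, the vertices $v(S_{j(m)})$, and hence the polygons $S_{j(m)}$, are pairwise distinct, and none equals $S_i$. Thus $n_i$ is at least the number of stair points $m$ with $v(S_{j(m)})\in Int(S_i)\cup Z(S_i)$, so it suffices to bound by $k-1$ the number of \emph{bad} stair points, namely those $m$ with $v(S_{j(m)})\notin Int(S_i)\cup Z(S_i)$.

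Next I would record the geometry of $S_i$. Write $v(T_i)=(a_i,b_i)$. By Lemma \ref{S_i_stair_polygon}, $Int(T_i)\subseteq S_i$, and $S_i\subseteq I^2(T_i)$ by construction, so, taking closures, $T_i\subseteq\overline{S_i}\subseteq\overline{I^2(T_i)}$. The triangle $T_i$ and the square $\overline{I^2(T_i)}$ share the same leftmost and rightmost $x$-coordinates $a_i$ and $a_i+1$ and the same lowest $y$-coordinate $b_i$, so the same holds for $\overline{S_i}$; for a half-open $r_i$-stair polygon these three numbers are $x_0^{(i)}$, $x_{r_i+1}^{(i)}$ and $y_{r_i+1}^{(i)}$. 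Hence $x_0^{(i)}=a_i$, $x_{r_i+1}^{(i)}=a_i+1$, $y_{r_i+1}^{(i)}=b_i$, and in particular every stair point satisfies $a_i<x_m^{(i)}<a_i+1$ and $y_m^{(i)}>b_i$. Writing $v(S_{j(m)})=(x_m^{(i)},\beta_m)$, the inclusion $(x_m^{(i)},y_m^{(i)})\in S_{j(m)}\subseteq I^2(T_{j(m)})$ forces $\beta_m\leq y_m^{(i)}$. A direct inspection of the half-open stair polygon shows that the trace of $Int(S_i)\cup Z(S_i)$ on the vertical line $x=x_m^{(i)}$ is exactly $\{x_m^{(i)}\}\times(b_i,y_m^{(i)}]$; since $\beta_m\leq y_m^{(i)}$ always, the stair point $m$ is therefore bad if and only if $\beta_m\leq b_i$.

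Now suppose for contradiction that there are $k$ bad stair points, and let $j_1,\ldots,j_k$ be the corresponding distinct indices, with $v(T_{j_t})=(c_t,\beta_t)$. Each $T_{j_t}$ presses $T_i$ (hence is distinct from $T_i$ and from the others), $a_i<c_t<a_i+1$, $\beta_t\leq b_i$, and, since $I^2(T_{j_t})\cap I^2(T_i)\neq\emptyset$, also $\beta_t>b_i-1$. Consider the open set
$$O=Int(T_i)\cap Int(I^2(T_{j_1}))\cap\cdots\cap Int(I^2(T_{j_k})).$$
Using the inequalities just listed, one checks that $O$ contains, for instance, the point $(a_i+1-\eta,\,b_i+\eta/2)$ for all sufficiently small $\eta>0$, so $O\neq\emptyset$. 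Now $O\subseteq T_i\cap I^2(T_{j_1})\cap\cdots\cap I^2(T_{j_k})$, and $T_{j_1},\ldots,T_{j_k}$ all press $T_i$, so Lemma \ref{I_cut_T_must_cut} gives $O\subseteq T_{j_1}\cap\cdots\cap T_{j_k}$; since $O$ is open, this upgrades to $O\subseteq Int(T_{j_1})\cap\cdots\cap Int(T_{j_k})$. Then any point of $O$ lies in the interiors of the $k+1$ distinct translates $T_i,T_{j_1},\ldots,T_{j_k}$, contradicting the fact that $\mathcal{T}$ is a $k$-fold packing. Hence there are at most $k-1$ bad stair points and $n_i\geq r_i-(k-1)=r_i-k+1$.

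The step I expect to be the main obstacle is this last contradiction: one must exhibit a common interior point of $T_i$ and the $k$ bad triangles. The clean route, I think, is not to track a single point through the half-open squares $I^2(T_{j_t})$ and the triangles $T_{j_t}$, but to carry the whole open set $O$ cut out by $Int(T_i)$ and the open squares; then $O\neq\emptyset$ is an elementary computation, Lemma \ref{I_cut_T_must_cut} moves all of $O$ into each $T_{j_t}$, and openness promotes this to $Int(T_{j_t})$. The remaining care is bookkeeping: checking the description of $Int(S_i)\cup Z(S_i)$ along a vertical line, and verifying that the $r_i$ polygons $S_{j(m)}$ are genuinely pairwise distinct and different from $S_i$ (which rests on $v(S_{j(m)})=v(T_{j(m)})$ together with the distinctness of the stair-point $x$-coordinates).
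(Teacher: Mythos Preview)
Your argument is correct, and it takes a genuinely different route from the paper's own proof. Both proofs start the same way: for each stair point $(x_m^{(i)},y_m^{(i)})$ one picks, via Lemma~\ref{x_coordinate_on_stair_point}, an index $j(m)\neq i$ with $v(S_{j(m)})=(x_m^{(i)},\beta_m)$, observes that these $r_i$ polygons are pairwise distinct, and identifies the ``bad'' ones as exactly those with $\beta_m\le b_i=y_{r_i+1}^{(i)}$. The divergence is in how one bounds the number of bad indices by $k-1$. The paper stays at the level of the stair polygons: it invokes Lemma~\ref{condition_for_cut} (hence Lemma~\ref{to_prove_no_R_cut_by_other}) to get $L_{i_j}\cap S_i=\emptyset$ for each bad $S_{i_j}$, and then uses the $k$-fold packing property of $\{S_1,\dots,S_N\}$ (Lemma~\ref{S_i_exact_k_fold}) to conclude that at most $k-1$ such $S_{i_j}$ can coexist with $S_i$. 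You instead drop back to the triangles: from the proof of Lemma~\ref{x_coordinate_on_stair_point} you extract that each $T_{j(m)}$ presses $T_i$, show that for $k$ bad indices the open set $O=\mathrm{Int}(T_i)\cap\bigcap_t \mathrm{Int}(I^2(T_{j_t}))$ is nonempty, and then use Lemma~\ref{I_cut_T_must_cut} (plus openness of $O$) to place $O$ inside $\bigcap_t \mathrm{Int}(T_{j_t})$, contradicting the $k$-fold packing property of $\mathcal{T}$ itself. Your route is more self-contained in that it never needs Lemmas~\ref{to_prove_no_R_cut_by_other} or~\ref{condition_for_cut}; the paper's route, on the other hand, uses only the statement of Lemma~\ref{x_coordinate_on_stair_point} and keeps the whole argument within the stair-polygon framework it has built.
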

\begin{proof}
Suppose that $Z(S_i)=\{(x_1^{(i)},y_1^{(i)}),\ldots,(x_{r_i}^{(i)},y_{r_i}^{(i)})\}$. By Lemma \ref{x_coordinate_on_stair_point}, we know that for every $j=1,\ldots,r_i$, there exists an $i_j\in\{1,\ldots,N\}\setminus\{i\}$ such that $(x_j^{(i)},y_j^{(i)})\in S_{i_j}$ and $x_j^{(i)}=x_{i_j}$ where $x_{i_j}$ is the $x$-coordinate of $v(S_{i_j})$ (see Figure \ref{Sij}). Let $y_i$ and $y_{i_j}$ be the $y$-coordinates of $v(S_i)$ and $v(S_{i_j})$, respectively. Let
$$\mathcal{F}=\{S_{i_j} :y_{i_j}\leq y_i, j=1,\ldots,r_i\}.$$
By Lemma \ref{condition_for_cut}, we know that $L_{i_j}\cap S_i=\emptyset$ for all $S_{i_j}\in\mathcal{F}$. We note that $S_i\notin \mathcal{F}$. Since $\{S_1,\ldots,S_N\}$ is a $k$-fold packing of $lI^2$, one can deduce that $card\{\mathcal{F}\}\leq k-1$. It is not hard to see that for every $S\in\{S_{i_1},S_{i_2},\ldots,S_{i_{r_i}}\}\setminus \mathcal{F}$, we have $v(S)\in Int(S_i)\cup Z(S_i)$. Hence
$$n_i\geq card\{Z(S_i)\}-card\{\mathcal{F}\}\geq r_i-k+1.$$
\begin{figure}[!ht]
  \centering
    \includegraphics[scale=.85]{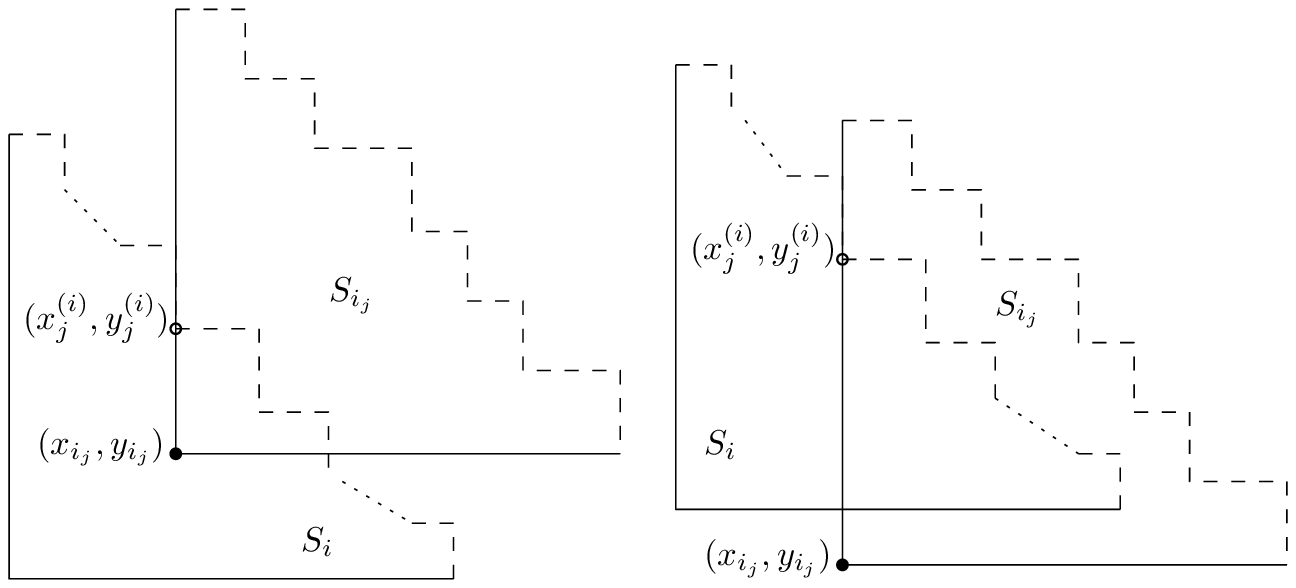}
   \caption{$S_{i_j}$}\label{Sij}
\end{figure}
\end{proof}

\begin{lem}\label{less_than_kN}
$$\sum_{i=1}^{N}n_i\leq kN.$$
\end{lem}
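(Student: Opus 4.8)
The plan is to reinterpret the left-hand side as a count over ordered pairs and then, for each fixed $j$, bound the number of indices $i$ for which the single point $v(S_j)$ lands in $Int(S_i)\cup Z(S_i)$. Concretely, $\sum_{i=1}^{N}n_i$ is exactly the number of ordered pairs $(i,j)$ with $v(S_j)\in Int(S_i)\cup Z(S_i)$, so it equals $\sum_{j=1}^{N}c_j$ where $c_j:=card\{i\in\{1,\ldots,N\} : v(S_j)\in Int(S_i)\cup Z(S_i)\}$. Hence it suffices to prove $c_j\leq k$ for every $j$.

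Fix $j$ and write $p:=v(S_j)$. The geometric core of the argument is the following observation about a single half-open stair polygon: if $p\in Int(S_i)\cup Z(S_i)$, then $p+(\varepsilon,-\varepsilon)\in S_i$ for all sufficiently small $\varepsilon>0$. If $p\in Int(S_i)$ this is clear. If $p\in Z(S_i)$, say $p=(x^{(i)}_m,y^{(i)}_m)$ for some $m\in\{1,\ldots,r_i\}$, then $p$ is the corner of the half-open rectangle $[x^{(i)}_m,x^{(i)}_{m+1})\times[y^{(i)}_{r_i+1},y^{(i)}_m)$ from which the stair polygon extends toward larger first coordinate and smaller second coordinate; thus $p+(\varepsilon,-\varepsilon)$ lies in that rectangle, hence in $S_i$, as soon as $0<\varepsilon<\min\{x^{(i)}_{m+1}-x^{(i)}_m,\ y^{(i)}_m-y^{(i)}_{r_i+1}\}$.

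Now suppose, for contradiction, that $c_j\geq k+1$ and choose distinct indices $i_1,\ldots,i_{k+1}$ with $p\in Int(S_{i_t})\cup Z(S_{i_t})$ for every $t$. Applying the observation to each of the finitely many polygons $S_{i_1},\ldots,S_{i_{k+1}}$ yields positive numbers $\varepsilon_1,\ldots,\varepsilon_{k+1}$; picking any $\varepsilon$ with $0<\varepsilon<\min_{t}\varepsilon_t$, the single point $q:=p+(\varepsilon,-\varepsilon)$ belongs to $S_{i_t}$ for all $t=1,\ldots,k+1$, so $q\in\bigcap_{t=1}^{k+1}S_{i_t}$. This contradicts Lemma \ref{not_exceed_k_1}. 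Therefore $c_j\leq k$, and summing over $j=1,\ldots,N$ gives $\sum_{i=1}^{N}n_i=\sum_{j=1}^{N}c_j\leq kN$.

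The one place that needs care is the choice of the perturbation direction: it must be the diagonal $(+\varepsilon,-\varepsilon)$ adapted to the way a half-open stair polygon opens at the points of $Z(S_i)$ — a perturbation along any other diagonal would fail to re-enter $S_i$ at such a corner — after which the passage to Lemma \ref{not_exceed_k_1} is immediate, since only finitely many polygons are involved. I do not anticipate a serious obstacle beyond getting this direction right.
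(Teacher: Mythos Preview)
Your proof is correct and follows essentially the same approach as the paper: both swap the order of summation to rewrite $\sum_i n_i$ as $\sum_j n_j^*$ (your $c_j$) and then bound each $n_j^*\le k$. The paper simply asserts ``it is obvious that $n_i^*\le k$'' from the $k$-fold packing property, whereas you supply the diagonal perturbation $p\mapsto p+(\varepsilon,-\varepsilon)$ to push a corner point of $Z(S_i)$ back into $S_i$ and then invoke Lemma~\ref{not_exceed_k_1}; this extra care is warranted since the points of $Z(S_i)$ do not lie in the half-open set $S_i$ itself.
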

\begin{proof}
For $i=1,\ldots,N$, let $\mathcal{F}_i=\{S_j : v(S_j)\in Int(S_i)\cup Z(S_i), j=1,\ldots,N\}$ and $\mathcal{F}^*_i=\{S_j : v(S_i)\in Int(S_j)\cup Z(S_j), j=1,\ldots,N\}$. Clearly, we have $n_i=card\{\mathcal{F}_i\}$. Let $n_i^*=card\{\mathcal{F}^*_i\}$. It is not hard to show that $\sum_{i=1}^{N}n_i=\sum_{i=1}^{N}n_i^*$. On the other hand,
since $\{S_1,\ldots,S_N\}$ is a $k$-fold packing of $lI^2$, it is obvious that $n_i^*\leq k$. Hence
$$\sum_{i=1}^{N}n_i=\sum_{i=1}^{N}n_i^*\leq kN.$$
\end{proof}
 The following lemma follows immediately from Lemmas \ref{more_than_r_k_1} and \ref{less_than_kN}.

\begin{lem}\label{approximate_average_stair_point}
$$\sum_{i=1}^{N}r_i\leq (2k-1)N.$$
\end{lem}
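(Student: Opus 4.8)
The plan is simply to chain the two preceding lemmas, which between them already carry out all of the combinatorial work; what remains is arithmetic. First I would rewrite the conclusion of Lemma \ref{more_than_r_k_1} in the form $r_i \le n_i + (k-1)$ for each $i \in \{1,\ldots,N\}$. Recall that $n_i$ counts the stair polygons $S_j$ whose base vertex $v(S_j)$ lies in $Int(S_i)\cup Z(S_i)$; the lemma gives $n_i \ge r_i - k + 1$ because each of the $r_i$ stair points of $S_i$ is, by Lemma \ref{x_coordinate_on_stair_point}, the base vertex of some other $S_j$, and at most $k-1$ of those $S_j$ can have base vertex escaping below $v(S_i)$ without violating the $k$-fold packing property of $\{S_1,\ldots,S_N\}$.

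Next I would invoke Lemma \ref{less_than_kN}, namely $\sum_{i=1}^N n_i \le kN$, whose proof rests on the bookkeeping identity $\sum_i n_i = \sum_i n_i^{*}$ (each incidence ``$v(S_j) \in Int(S_i)\cup Z(S_i)$'' is counted once from either side) together with $n_i^{*} \le k$, which is just the $k$-fold packing condition read off at the single point $v(S_i)$.

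Summing $r_i \le n_i + (k-1)$ over $i = 1,\ldots,N$ and substituting the bound on $\sum_i n_i$ then yields
\[
\sum_{i=1}^N r_i \;\le\; \sum_{i=1}^N n_i \;+\; (k-1)N \;\le\; kN + (k-1)N \;=\; (2k-1)N,
\]
which is exactly the assertion. I do not anticipate any genuine obstacle here: all of the geometry has been absorbed into Lemmas \ref{more_than_r_k_1} and \ref{less_than_kN}, so the only thing worth double-checking is that the per-index inequality $r_i \le n_i + (k-1)$ holds uniformly in $i$ with no degenerate exceptions (in particular for indices with $r_i = 0$), which it does, so the termwise summation is legitimate.
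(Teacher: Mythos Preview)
Your proof is correct and is exactly the paper's approach: the paper simply states that the lemma follows immediately from Lemmas \ref{more_than_r_k_1} and \ref{less_than_kN}, and your termwise summation $r_i \le n_i + (k-1)$ followed by $\sum_i n_i \le kN$ is precisely how those two combine. (Your parenthetical recap of Lemma \ref{x_coordinate_on_stair_point} is slightly loose---the stair point is not literally $v(S_j)$ but only shares its $x$-coordinate---though this does not affect the argument, which only uses the stated conclusions of the two lemmas.)
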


\section{Proof of Main Theorem}
 Let $\mathcal{T}=\{T_1,T_2,\ldots,T_N\}$ be a \emph{normal} $k$-fold translative packing of $lI^2$ with $T$. Let $S_i$ be the half-open $r_i$-stair polygon defined by $\mathcal{T}$ as shown in Section \ref{key_section}.
By Lemma \ref{S_i_exact_k_fold}, Lemma \ref{approximate_average_stair_point}, the convexity of $A^*$ and (\ref{lattice_density}), one obtains
\begin{align*}
\frac{|T_1|+|T_2|+\cdots+|T_N|}{|lI^2|} &= \frac{N|T|}{|lI^2|}\\
&\leq \frac{kN|T|}{|S_1|+|S_2|+\cdots+|S_N|}\\
&\leq \frac{kN|T|}{A^*(r_1)+A^*(r_2)+\cdots+A^*(r_N)}\\
&\leq \frac{k|T|}{A^*(2k-1)}=\delta_L^k(T),
\end{align*}
and hence
\begin{equation*}
\delta_T^k(T)\leq \delta_L^k(T).
\end{equation*}
This completes the proof.
\section*{Acknowledgment}
This work was supported by 973 Programs 2013CB834201 and 2011CB302401.

\newpage

\end{document}